\documentclass[11pt]{article}

\usepackage[T1]{fontenc}
\usepackage[utf8]{inputenc}
\usepackage{amsmath,amssymb,amsthm,mathrsfs}
\usepackage{hyperref}
\usepackage[margin=1in]{geometry}

\hypersetup{colorlinks=true,linkcolor=blue,citecolor=blue,urlcolor=blue}

\theoremstyle{plain}
\newtheorem{theorem}{Theorem}[section]
\newtheorem{proposition}[theorem]{Proposition}
\newtheorem{lemma}[theorem]{Lemma}
\newtheorem{corollary}[theorem]{Corollary}

\theoremstyle{definition}

\theoremstyle{remark}
\newtheorem{remark}[theorem]{Remark}

\newcommand{\R}{\mathbb{R}}

\newcommand{\Z}{\mathbb{Z}}
\newcommand{\s}{\mathbb{S}}
\newcommand{\T}{\mathbb{T}}
\newcommand{\D}{\mathbb{D}}
\newcommand{\F}{\mathcal{F}}
\newcommand{\wt}[1]{\widetilde{#1}}
\newcommand{\Deck}{\mathrm{Deck}}
\newcommand{\id}{\mathrm{Id}}

\begin{document}

\title{Closed $4$--Manifolds Foliated by Hyperplanes}
\author{J.\,M.\ Espinar \and H.\ Rosenberg}
\date{}
\maketitle

\begin{abstract}
\begin{sloppypar}
Let $M^4$ be a closed, orientable $4$--manifold carrying a transversely oriented $C^2$ codimension--one foliation whose leaves are diffeomorphic to $\R^3$. We prove that $M^4$ is homeomorphic to the $4$--torus $\T^4$. 
We also show that, whenever the original smooth structure on $M$ admits a smooth defining $1$--form, the conclusion sharpens to a diffeomorphism $M\cong\T^4$.
\end{sloppypar}
\end{abstract}

\medskip
\noindent\textbf{2020 Mathematics Subject Classification.} Primary 57R30; Secondary 57K40, 37C85.

\noindent\textbf{Key words and phrases.} Codimension--one foliations; foliations by hyperplanes; closed $4$--manifolds; the $4$--torus; holonomy; closed defining $1$--form; Anosov diffeomorphisms.

\section{Introduction}\label{sec:intro}

In 1968, H. Rosenberg~\cite{Rosenberg68} proved that the $3$--torus is the only closed $3$--manifold admitting a $C^{2}$ foliation by planes. Equivalently, if a closed, orientable $3$--manifold $M^{3}$ carries a $C^{2}$ foliation whose leaves are all diffeomorphic to $\R^{2}$, then $M^{3}\cong\T^{3}$. This is a useful structural input in $3$--manifold topology and, in a different direction, in the study of Anosov diffeomorphisms whose stable or unstable foliations are of class $C^{2}$. A key step in Rosenberg's argument uses J.~Stallings' fibering theorem~\cite{Stallings62}: if a closed irreducible $3$--manifold $M^{3}$ admits an epimorphism $\pi_{1}(M)\twoheadrightarrow \Z$ with finitely generated kernel (not $\Z/2\Z$), then $M$ fibers over $\s^{1}$.

In this paper, we will go further. We will see that the analogous statement holds in dimension four, namely:

\begin{theorem}\label{main}
Let $M^4$ be a closed, orientable $4$--manifold carrying an orientable $C^2$ codimension--one foliation
whose leaves are diffeomorphic to $\R^3$. Then $M^4$ is homeomorphic to the $4$--torus $\T^4$.
\end{theorem}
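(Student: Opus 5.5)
We follow the route of the $3$--dimensional case, replacing Stallings' fibering theorem by results on aspherical $4$--manifolds and on topological rigidity of tori. The aim is to show that $\pi_1(M)\cong\Z^4$, that $M$ is aspherical, and then to invoke topological rigidity of $\T^4$ (Freedman--Quinn: a closed aspherical $4$--manifold with fundamental group $\Z^4$, which is good, is homeomorphic to $\T^4$).

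\textbf{Step 1: no Reeb components, asphericity, injectivity.} Since every leaf is $\R^3$, no leaf is compact, so by Novikov-type arguments (Haefliger/Novikov in higher codimension-one, using that there are no vanishing cycles when leaves are simply connected) the foliation $\F$ is taut and has no Reeb-type components. The lift $\wt{\F}$ to the universal cover $\wt M$ is then a foliation by $\R^3$'s whose leaf space is a (possibly non-Hausdorff) simply connected $1$--manifold $\mathcal L$. Because there are no vanishing cycles, each leaf of $\wt{\F}$ is properly embedded and separates. Hence $\wt M$ is an $\R^3$--bundle over $\mathcal L$ in the homotopy sense, and is contractible. So $M$ is aspherical, and $\pi_1(M)$ acts on $\mathcal L$.

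\textbf{Step 2: trivial holonomy and a closed form.} Every leaf is simply connected, so all holonomy is trivial. We use Sacksteder/Imanishi-type theory for codimension-one $C^2$ foliations without holonomy; this is where $C^2$ enters, through Sacksteder's theorem excluding exceptional minimal sets with trivial holonomy. Then $\F$ is topologically conjugate to a foliation defined by a closed nonsingular $1$--form, and $\mathcal L\cong\R$. Moreover $\pi_1(M)$ acts freely on $\R$ by translations after conjugation (Hölder's theorem), which gives an injection $\pi_1(M)\hookrightarrow\R$. Thus $\pi_1(M)$ is free abelian, say $\Z^k$, and it is finitely generated since $M$ is compact. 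Since $\wt M$ is contractible, $M$ is a $K(\Z^k,1)$ that is a closed $4$--manifold, so cohomological dimension forces $k=4$.

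\textbf{Step 3: conclusion.} $M$ is a closed aspherical $4$--manifold homotopy equivalent to $\T^4$. Since $\Z^4$ is good in Freedman's sense and satisfies the Farrell--Jones/Borel conjecture, $M$ is homeomorphic to $\T^4$. Orientability and transverse orientability are used so that the action on $\mathcal L$ is orientation preserving, which Hölder's theorem needs.

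\textbf{Main obstacle.} The hard step is Step 2: showing the leaf space is Hausdorff and that the foliation has no exceptional or resilient behavior, so that Hölder applies. I would first prove that every leaf is dense or that all leaves are closed in a fibration. Imanishi's theorem for codimension-one foliations without holonomy then gives conjugacy to a closed-form foliation, in which every leaf is dense or the foliation is a fibration over $\s^1$ with fiber $\R^3$. The fibration case is impossible by compactness. In the dense case the period group has rank at least $2$, and the argument above applies.
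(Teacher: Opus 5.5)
Your outline is sound, and its skeleton is the paper's: trivial holonomy, a closed defining form from Sacksteder (made $C^2$ via Imanishi), an injection $\pi_1(M)\hookrightarrow\R$, $k=4$ by cohomological dimension, and Freedman--Quinn with Farrell--Jones at the end. Where you differ, the paper is more economical.

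Your Step 1 gets asphericity from Novikov-type leaf-space arguments: no null-homotopic closed transversals, properly embedded lifted leaves, a possibly non-Hausdorff leaf space, and then something like Palmeira's theorem to reach $\wt M\cong\R^4$. As written this is the least justified part of your proposal, and it is not needed. Once $\omega$ is a closed $C^2$ form, a primitive $f$ on $\wt M$ and the flow of a field $X$ with $\omega(X)=1$ give $\wt M\cong\R\times\wt L_0\cong\R^4$ directly (Proposition~\ref{prop:global-product}). The same flow argument shows that the level sets of $f$ are connected, i.e.\ $\mathcal L\cong\R$.

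Likewise, H\"older's theorem is superfluous: the identity $f\circ\gamma=f+c(\gamma)$ already makes the deck action on $\mathcal L$ an action by translations. Your ``main obstacle'' is also already handled by Sacksteder's theorem, which gives the closed form for any transversely oriented $C^2$ foliation without holonomy on a closed manifold, with no need to first establish minimality. One small correction: a free action on $\R$ is automatically orientation preserving. Transverse orientability is really needed so that a global defining $1$--form can exist.

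The one step you must actually supply is freeness of the action. You assert it right after obtaining the closed form, as if it followed from it, but it does not. Take $\Sigma_g\times\T^2$ with $g\ge 2$, foliated by the products of $\Sigma_g$ with the leaves of an irrational linear foliation of $\T^2$. It has a closed nonsingular defining form, no holonomy, dense leaves and $\mathcal L\cong\R$. Yet $\pi_1(\Sigma_g)$ acts trivially on $\mathcal L$, and $\pi_1(M)$ is not abelian.

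Freeness is where $\pi_1(L)=0$ is used a second time, beyond killing holonomy. The argument is the paper's Lemma~\ref{lem:deck-period-injective}. If $\gamma$ fixes a point of $\mathcal L$, it preserves a lifted leaf $\wt L$. The covering $\pi|_{\wt L}:\wt L\to L$ of the simply connected leaf $L$ is injective, so $\gamma(p)=p$ for any $p\in\wt L$, and since deck transformations act freely, $\gamma=\id$. With that line added, and Step 1 replaced by the flow argument, your proof is complete.
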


Note that we state Theorem~\ref{main} in the topological category. In dimension four, the smooth and the
topological classifications diverge in essential ways, and our argument determines the topological type
of $M$ directly from the foliation, without producing a smooth model. Under an additional regularity
hypothesis, however, we obtain a sharper statement in the smooth category:

\begin{corollary}[Smooth upgrade under a smooth closed $1$--form]
\label{cor:smooth-upgrade}
Assume that the \emph{original} smooth structure on $M$ admits a smooth, nowhere--vanishing closed
$1$--form $\omega$ defining $\mathcal F$. Then $M$ is diffeomorphic to $\T^4$.
\end{corollary}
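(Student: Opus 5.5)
We want to show that a smooth closed nonsingular 1-form $\omega$ defining a foliation by $\R^3$'s forces $M$ to be diffeomorphic to $\T^4$. The plan is to use Tischler's theorem to realize $M$ as a mapping torus, identify the fiber as a smooth homotopy $3$--torus, and then show the monodromy is smoothly isotopic to a linear map.

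\emph{Step 1: fibration.} By Tischler's argument, we perturb $\omega$ within its smooth class to a nearby closed $1$--form $\omega'$ with rational periods. After scaling, $\omega'=f^*d\theta$ for a smooth submersion $f\colon M\to\s^1$. Hence $M$ is the mapping torus of a diffeomorphism $\phi\colon F\to F$, where $F$ is a closed orientable $3$--manifold. Everything here happens in the original smooth structure, because $\omega$ is smooth in it. This is the point where the hypothesis is used.

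\emph{Step 2: the fiber is $\T^3$.} Since all leaves of $\F$ are $\R^3$, the universal cover $\wt M$ is $\R^3\times\R$, so $M$ is aspherical. The leaves are simply connected, so the map $\pi_1(M)\to\R$ given by integrating $\omega$ (the period map) is injective. Therefore $\pi_1(M)$ embeds in $\R$, and $\pi_1(M)\cong\Z^k$. Asphericity and $\dim M=4$ then give $k=4$.

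From the fibration, $\pi_1(F)$ is the kernel of a surjection $\Z^4\to\Z$, so $\pi_1(F)\cong\Z^3$. The fiber $F$ is aspherical, being a closed $3$--manifold covered by the contractible space $\wt F$, which is a fiber of $\wt M\to\R$. By geometrization, or by classical results for Haken manifolds (Waldhausen), $F$ is diffeomorphic to $\T^3$; in dimension $3$ the topological and smooth categories agree.

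\emph{Step 3: the monodromy.} Since $\pi_1(M)\cong\Z^4$ is abelian, $\phi_*$ acts trivially on $\pi_1(F)=\Z^3$. So $\phi$ is homotopic to the identity. For Haken manifolds, homotopic diffeomorphisms are isotopic (Waldhausen), so $\phi$ is isotopic to $\id$. Hatcher's theorem $\mathrm{Diff}(\T^3)\simeq \T^3\times GL_3(\Z)$ gives the same conclusion even smoothly. Isotopic monodromies give diffeomorphic mapping tori, so $M\cong F\times\s^1\cong\T^4$ smoothly.

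\emph{Main obstacle.} I expect the delicate step to be Step 3, namely making sure the isotopy is smooth rather than merely topological. This is where I would invoke Hatcher's resolution of the Smale-type conjecture for Haken $3$--manifolds, which guarantees that the smooth mapping class of $\T^3$ is detected by its action on $\pi_1$. A secondary check is that the Tischler perturbation preserves the smooth structure; this holds because it is just a smooth change of $1$--form within the same smooth manifold.
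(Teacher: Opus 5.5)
Your proposal is correct and follows the paper's proof essentially step for step. The steps are: Tischler's fibration in the original smooth structure; $\pi_1(F)\cong\Z^3$ plus asphericity, giving $F\cong\T^3$ via geometrization and the agreement of the smooth and topological categories in dimension three; and trivial monodromy on $\pi_1$, upgraded to a smooth isotopy via Waldhausen and Hatcher, so that $M\cong\T^3\times\s^1$. One small caveat: your alternative of using Waldhausen instead of geometrization to identify $F$ would also need $F$ to be irreducible, which requires the Poincaré conjecture to exclude a fake-ball summand, so geometrization is the route to keep.
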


We should point out that the hypothesis of Corollary~\ref{cor:smooth-upgrade} is strictly stronger
than that of Theorem~\ref{main}. In fact, the suspension of the commuting $C^\infty$ circle
diffeomorphisms with Liouville rotation numbers constructed by B.~Fayad and
K.~Khanin~\cite[Remark~1]{FayadKhanin} -- a foliated $\s^1$--bundle over $\T^3$ -- is a
codimension--one $C^\infty$ foliation of $\T^4$ by simply connected leaves whose underlying
topological foliation admits no smooth defining $1$--form in the standard smooth structure of $\T^4$;
this rules out a direct smooth upgrade of Theorem~\ref{main} without an extra regularity assumption.

The four--dimensional case is more subtle than the three--dimensional one. Many people have studied
the topology of closed manifolds carrying codimension--one foliations with simply connected leaves
(see~\cite{Li02,HectorHirsch83,Palmeira78,Gabai91,YokoyamaTsuboi,YokoyamaThesis} and references
therein). In particular, some of these results bear directly on Theorem~\ref{main}.

In arbitrary dimension $n\ge 3$, one can give a topological proof under the weaker assumption that
the foliation is only $C^0$. For codimension--one $C^0$ foliations without holonomy on closed manifolds,
T.~Li~\cite[Theorem~1.1]{Li02} showed that the image of $\pi_1$ of any leaf in $\pi_1(M)$ contains the commutator
subgroup. In particular, when the leaves are simply connected, this forces $\pi_1(M)$ to be abelian; simple
connectedness of the leaves also makes the foliation holonomy--free. By G.~Hector and
U.~Hirsch~\cite[Ch.~VIII, Theorem~2.2.1]{HectorHirsch83} a codimension--one foliation without holonomy
on a closed manifold becomes a trivial foliated $\R$--bundle on passing to the universal cover, so
the universal cover of $M$ is homeomorphic to $\R^{n-1}\times\R\cong\R^n$. This step is purely
topological -- it proceeds by lifting paths along the transverse one--dimensional foliation, with no
appeal to a defining $1$--form -- and is valid already in class $C^0$. Thus $M$
is aspherical and $\pi_1(M)$ has cohomological dimension $n$ and is therefore torsion--free; in
particular, $\pi_1(M)\cong\Z^n$. F.T.~Farrell and L.E.~Jones~\cite{FarrellJones88} proved topological rigidity for closed aspherical
manifolds with virtually polycyclic fundamental group. In dimension four, combined with topological
surgery for good fundamental groups in the sense of M.H.~Freedman~\cite{FreedmanQuinn}, this finally
yields $M\cong_{\mathrm{Top}}\T^n$.

T.~Yokoyama~\cite[Theorem~3]{YokoyamaThesis} carries out this synthesis explicitly, in arbitrary
dimension $n\ge 3$ and under $C^0$ regularity: a transversely orientable
codimension--one $C^0$ foliation of a closed manifold $M^n$ whose leaves are all homeomorphic to
$\R^{n-1}$ forces $M^n\cong\T^n$. See also the joint paper of
T.~Yokoyama and T.~Tsuboi~\cite[Remark~1.4]{YokoyamaTsuboi}, where they treat
codimension--one minimal foliations and the structure of $\pi_1$ of the leaves, appealing to
S.P.~Novikov~\cite{Novikov65} and Farrell--Jones~\cite{FarrellJones88}. In dimension three, the same
conclusion is part of the topological theory of $3$--manifold foliations developed by
D.~Gabai~\cite{Gabai91} and others.

We should point out that the hypothesis on the leaves in Theorem~\ref{main} -- they are
diffeomorphic to $\R^3$ and, in particular, simply connected -- is much stronger than just asking
the leaves to be pairwise diffeomorphic. In fact, M.W.~Hirsch~\cite{Hirsch75Stable} constructed,
for every $n\ge 3$, a closed $n$--manifold $M^n$ carrying an analytic codimension--one foliation
$\mathcal{F}$ whose leaves are mutually diffeomorphic and whose holonomy is non--trivial; the foliation
is, moreover, stable under $C^1$--small perturbations of its tangent field, and its unique minimal
set is exceptional. In dimension three, the construction reads as follows. Let
$f:\s^1\to \s^1$ be a degree--$2$ structurally stable $C^\infty$ immersion with a single attracting
fixed point, and let $g:\s^1\times \D^2\to \s^1\times \D^2$ be an embedding covering $f$ on the
$\s^1$--factor, so that the image $g(\s^1\times \D^2)$ winds twice around the original solid torus.
Setting $V:=\s^1\times \D^2\setminus \mathrm{int}\, g(\s^1\times \D^2)$ and identifying each $x\in
\partial(\s^1\times \D^2)$ with $g(x)\in\partial V$ produces a closed $3$--manifold $M$ whose
naturally induced codimension--one foliation $\mathcal{F}$ has a Cantor minimal set
$M_\Lambda$, on which the holonomy is conjugate to the one--sided $2$--shift; all other leaves
are non--compact surfaces of either finite genus or genus zero with a Cantor set of ends. A small
additional surgery, in the spirit of the exposition kindly communicated to us by
\'E.~Ghys\footnote{Ghys (private communication, 2026-05-26) attributes the original observation
that leaf--diffeomorphism is compatible with non--trivial holonomy to an unpublished construction
by S.\,E.~Goodman presented at the 1977 Dijon colloquium on foliations and dynamics; closely
related foliation--theoretic questions on closed leaves and holonomy--invariant measures were
treated in Goodman's published work~\cite{Goodman75Closed, GoodmanPlante79Holonomy}.} -- replacing
a tubular neighbourhood of a total transversal by a copy of $(\T^2\setminus \D^2)\times \s^1$ --
renders every leaf an infinite--genus surface with a single end, so that all leaves become
mutually diffeomorphic while the holonomy of the foliation is unaffected. In particular, the
mere requirement that all leaves be pairwise diffeomorphic neither forces $\mathcal{F}$ to be
defined by a closed $1$--form nor constrains the topology of the ambient manifold to any
significant degree. Theorem~\ref{main} relies critically on the contractibility of the leaves:
it is the vanishing of $\pi_1(L)$ for each leaf $L$, not their common diffeomorphism type, that
yields the absence of holonomy (Lemma~\ref{lem:nohol}) and, through Sacksteder's theorem, the
closed defining $1$--form on which the rest of the argument depends.

In this paper, we go in two directions. On the one hand, under the $C^2$ regularity hypothesis in
dimension four, we give a self--contained and geometric proof of Theorem~\ref{main}: rather than
passing through the topological inputs of Li, Hector--Hirsch and Farrell--Jones, we work directly
with the flow of a transverse vector field on the universal cover. This simplifies the argument and
makes its geometric content explicit. On the other hand, our main novel contribution is
Corollary~\ref{cor:smooth-upgrade}, which sharpens Theorem~\ref{main} under an additional
smoothness assumption: in the presence of a smooth defining $1$--form on $M$, the topological
conclusion of Theorem~\ref{main} upgrades to a diffeomorphism. The proof of the corollary uses
D.~Tischler's fibering theorem~\cite{Tischler70}, the geometrization of $3$--manifolds
(G.~Perelman~\cite{Perelman1,Perelman2}) and E.E.~Moise's smoothing theorem~\cite{Moise52,Moise77}.

The proof of Theorem~\ref{main} has two parts, one foliation-theoretic and one group-theoretic.
On the one hand, since the leaves are simply connected, the foliation has no holonomy, and
R.~Sacksteder's theorem~\cite{Sacksteder65} provides a continuous nowhere-vanishing closed $1$--form
defining $\mathcal F$. This form need not be smooth with respect to the original differentiable atlas
on $M$; to invoke smooth differential calculus we appeal to a theorem of
H.~Imanishi~\cite{Imanishi74}, which produces a $C^r$ differentiable structure on the topological
manifold underlying $M$ for which the defining $1$--form is of class $C^r$ (here $r=2$ suffices).
Replacing the smooth structure on $M$ accordingly -- which does not affect the topological
conclusion -- we may assume that the closed $1$--form $\omega$ is of class~$C^2$. On the universal
cover, the pullback $\wt\omega$ admits a primitive $f$, and the flow of a vector field $X$ satisfying
$\omega(X)=1$ yields a global product decomposition $\wt M\cong \R\times \R^3$; in particular, $M$ is
aspherical.

On the other hand, the primitive $f$ defines a translation homomorphism on the deck group, which
agrees with the period homomorphism,
\[
I_\omega:\pi_1(M,x_0)\longrightarrow \R,
\qquad
I_\omega([\beta])=\int_\beta \omega,
\]
so $\pi_1(M)$ is finitely generated, abelian and torsion-free; in particular,
$\pi_1(M)\cong\Z^r$ for some $r\ge 0$. Since $M$ is a closed aspherical $4$--manifold, a cohomological
comparison with the torus forces $r=4$, and a theorem of M.H.~Freedman and F.~Quinn then upgrades the
resulting homotopy equivalence $M\simeq\T^4$ to a homeomorphism. The proof of
Corollary~\ref{cor:smooth-upgrade} is given in Section~\ref{sec:proof}.

\section{The closed \texorpdfstring{$1$}{1}--form and the transverse flow}\label{sec:prelim}

Let $M^4$ be a closed, orientable $4$--manifold carrying a transversely oriented $C^2$ codimension--one
foliation $\F$ whose leaves are all diffeomorphic to $\R^3$. We begin by recording two basic consequences of this hypothesis: the foliation has no holonomy, and
it is defined by a nowhere--vanishing closed $1$--form.

\begin{lemma}[Trivial holonomy for simply connected leaves]\label{lem:nohol}
Every leaf $L$ with $\pi_1(L)=0$ has trivial holonomy. In particular, if all leaves are simply connected, then
$\F$ has no holonomy.
\end{lemma}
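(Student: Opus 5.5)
The plan is to use the fact that the holonomy of a leaf is, by construction, a homomorphism out of its fundamental group. Fix a leaf $L$ and a base point $x_0\in L$, and choose a small transversal $\tau\cong(-\epsilon,\epsilon)$ through $x_0$, oriented using the transverse orientation of $\F$. The holonomy of $L$ is the homomorphism
\[
h_L:\pi_1(L,x_0)\longrightarrow \mathrm{Germ}_0\bigl(\mathrm{Homeo}^+(\tau,x_0)\bigr),
\]
which sends a loop $\gamma$ to the germ at $x_0$ of the map obtained by lifting $\gamma$ to nearby leaves along a chain of foliated charts.

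I would first recall, or briefly check, the two standard facts that make this map well defined. The germ $h_L([\gamma])$ depends only on the homotopy class of $\gamma$ in $L$ with fixed endpoints. It also respects concatenation, so $h_L([\gamma_1\ast\gamma_2])=h_L([\gamma_2])\circ h_L([\gamma_1])$, up to the usual ordering convention.

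The only step with real content is homotopy invariance. To establish it, cover the image of a homotopy $H:[0,1]^2\to L$ by finitely many foliated charts. Then subdivide the square finely, so that each small subsquare maps into a single chart. Inside one chart, the holonomy along the boundary of a subsquare is the identity germ, because plaques are matched via the chart projection to the transverse coordinate. Composing these identities over all subsquares gives the invariance. This is classical (Camacho--Lins Neto, Candel--Conlon), so in the paper I would cite it rather than reprove it.

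Granting this, the argument is immediate. If $\pi_1(L)=0$, every loop at $x_0$ is null-homotopic, so $h_L([\gamma])=h_L(1)$ is the identity germ. Hence $L$ has trivial holonomy. Changing the base point conjugates $h_L$, so triviality does not depend on the choice of $x_0$ or $\tau$. Applying this to every leaf shows that $\F$ has no holonomy when all leaves are simply connected, as is the case here since every leaf is diffeomorphic to $\R^3$. The only point needing care is the well-definedness of holonomy on homotopy classes; everything else is formal.
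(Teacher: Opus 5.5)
Your proposal is correct and follows the same route as the paper: holonomy is a homomorphism out of $\pi_1(L,x)$, well defined up to conjugacy, so it is trivial when $\pi_1(L)=0$. The paper cites Moerdijk--Mr\v{c}un for the well-definedness, whereas you also sketch the homotopy-invariance argument; that sketch is fine but not needed.
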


\begin{proof}
Fix a leaf $L$ and a basepoint $x\in L$. Recall that the holonomy homomorphism
\[
\mathrm{hol}_{L,x}:\pi_1(L,x)\longrightarrow \mathrm{Diff}_x(T)
\]
into the group of germs of diffeomorphisms of a small transversal $T$ through $x$ is well defined up to
conjugacy (see~\cite[Chapter~2]{MoerdijkMrcun}). Since the holonomy group is a quotient of $\pi_1(L,x)$,
it is trivial as soon as $\pi_1(L,x)=0$.
\end{proof}

\begin{remark}\label{rem:goodman-hirsch}
The simple--connectedness assumption in Lemma~\ref{lem:nohol} cannot be relaxed to the requirement
that the leaves be merely pairwise diffeomorphic. Indeed, the construction of M.W.~Hirsch~\cite{Hirsch75Stable},
recalled in the introduction, produces for every $n\ge 3$ a closed $n$--manifold carrying a
$C^\omega$ codimension--one foliation $\mathcal{F}$ whose holonomy pseudogroup contains a copy of
the one--sided $2$--shift, and which is moreover \emph{stable} under $C^1$--small perturbations of
the tangent distribution -- so the non--trivial holonomy cannot be dissolved by perturbation. A
small additional surgery, as explained in the introduction, makes all leaves of $\mathcal{F}$
pairwise diffeomorphic without affecting the holonomy. The absence of holonomy in our setting
therefore depends crucially on the topology of the individual leaves: each leaf $L$ satisfies
$\pi_1(L)=0$, so the holonomy homomorphism in Lemma~\ref{lem:nohol} has trivial domain, and
Sacksteder's theorem (Theorem~\ref{thm:sacksteder} below) provides the closed defining $1$--form
on which the rest of the argument rests.
\end{remark}

\begin{theorem}[\cite{Sacksteder65}, Theorem~6]\label{thm:sacksteder}
Let $M$ be closed and let $\F$ be a transversely oriented $C^2$ codimension--one foliation without holonomy.
Then there exists a continuous, nowhere--vanishing $1$--form $\omega$ on $M$, leafwise smooth and
transversely continuous, with $d\omega=0$ in the distributional sense, such that
\[
\ker\omega=T\F.
\]
\end{theorem}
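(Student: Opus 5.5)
The plan is to build a transverse structure in which all holonomy maps are translations, and then to let $\omega$ be the differential of the transverse coordinate. Concretely, I will look for a foliated atlas whose transverse coordinate changes are of the form $t'=t+c$ with $c$ locally constant. In such an atlas the local $1$--forms $dt$ agree on overlaps. They therefore patch to a global form $\omega$ with $\ker\omega=T\F$; it is nowhere vanishing because $t$ is strictly monotone along transversals, and leafwise smooth. It is also distributionally closed, since locally $\omega=dt$ with $t$ a continuous function.

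\textbf{Step 1: no exceptional minimal sets.} Here I will use the $C^2$ hypothesis through Sacksteder's classical result on exceptional minimal sets. If $\F$ had an exceptional minimal set $\Lambda$, then some leaf of $\Lambda$ would carry a holonomy element with a hyperbolic (contracting) fixed point. This contradicts the absence of holonomy, so every minimal set of $\F$ is either a compact leaf or all of $M$.

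\textbf{Step 2: the holonomy pseudogroup acts by translations.} Choose a finite complete family of transversals (arcs, oriented by the transverse orientation), and let $\Gamma$ be the holonomy pseudogroup acting on them. Since $\F$ has no holonomy, every element of $\Gamma$ that fixes a point is the identity on a neighbourhood of that point. I distinguish two cases.
\begin{itemize}
\item If $\F$ is minimal, I argue as in Hölder's theorem: a free action on an ordered $1$--dimensional space is Archimedean, hence essentially abelian. This produces a $\Gamma$--invariant, atomless Borel measure $\mu$ of full support on the transversals. The coordinate $t(x)=\mu([x_0,x])$ is then a homeomorphism onto an interval, and in it every holonomy map is a translation.
\item If there are compact leaves, the set of compact leaves is closed (Haefliger), and each compact leaf has a product neighbourhood because its holonomy is trivial. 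In each complementary region the same free-action argument applies: either the region fibres over an interval, or its leaves are dense in it. This gives an invariant measure there, and I then add a smooth density across the product regions so that the measure has full support on the whole transversal.
\end{itemize}
In both cases the measure defines the translation atlas described above.

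\textbf{Main obstacle.} I expect the main difficulty to be Step 2 in the mixed case: producing an invariant measure that has full support and no atoms simultaneously on all regions, and checking that the patched coordinates are translation-compatible across the boundaries of product neighbourhoods of compact leaves. My first approach is to use Reeb stability to reduce to the case where every leaf is compact, in which case $\F$ is a fibration over $\s^1$ and $\omega$ is the pullback of $d\theta$. If that reduction fails, I will argue region by region, using the Archimedean ordering of the free action.
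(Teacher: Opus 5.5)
The paper gives no proof of this result; it only quotes Sacksteder. Your strategy (rule out exceptional minimal sets, then build a holonomy--invariant transverse metric) is the natural one and matches the bundle--like metric mentioned in Remark~\ref{rem:sacksteder-regularity}, and Step~1 is correct and is exactly where $C^2$ enters. But you have misplaced the difficulty. The mixed case is empty. Trivial holonomy plus Reeb stability makes the union of compact leaves open, and Haefliger makes it closed. So either every leaf is compact, in which case $p:M\to M/\F\cong\s^1$ is a fibration and $\omega=p^*d\theta$ works, or there is no compact leaf and, by Step~1, $\F$ is minimal.

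The real content of the theorem is therefore the minimal case, and there your argument is only a slogan. H\"older's theorem concerns a group acting freely on $\R$. The holonomy pseudogroup instead consists of partially defined maps on a finite union of arcs, with no global order and no unrestricted composition, and ``no holonomy'' only says that germs at fixed points are trivial. One way to get a genuine group action is the structure theorem for foliations without holonomy: $\wt\F$ is a product foliation of $\wt M$ with leaf space $\R$. This is where absence of holonomy is used, to lift leaf paths along a transverse flow that need not preserve $\F$. Then each element of $\pi_1(M)$ acts on $\R$ either trivially or without fixed points. H\"older makes the effective quotient abelian and semiconjugate to a group of translations, and minimality of $\F$ turns the semiconjugacy into a homeomorphism whose pullback of Lebesgue measure is your $\mu$. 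The other way is to prove an Archimedean property for the pseudogroup itself and run a Haar--type construction. Either way, this step must be proved, not asserted.

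Second, even granting $\mu$, your last step overclaims. The coordinate $t(x)=\mu([x_0,x])$ is only a homeomorphism of each transversal. The measure $\mu$ may be singular with respect to Lebesgue measure in the given smooth structure, and then $dt$ is not a continuous $1$--form at all; strict monotonicity of $t$ would not give non--vanishing anyway (compare $x\mapsto x^3$). This cannot be repaired, because the statement as worded asks for too much. Suspend an analytic circle diffeomorphism with irrational rotation number that is not $C^1$--conjugate to a rotation (such examples are due to Arnold and Herman). This gives an analytic, transversely oriented foliation of $\T^2$ by lines, hence without holonomy. A continuous, distributionally closed defining form would be $a(y)\,dy$, with $a$ continuous and non--vanishing, in every foliated chart. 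Its restriction to the closed transversal $\s^1$ would then be an invariant measure of the return map with continuous positive density, and integrating that density would produce a $C^1$ conjugacy to a rotation, a contradiction. What your construction really yields, and all one can hope for in general, is the $C^0$ statement: a holonomy--invariant, atomless, fully supported transverse measure, equivalently a foliated atlas whose transverse coordinate changes are translations. It becomes a genuine closed $C^r$ form only after changing the differentiable structure, which is exactly what Theorem~\ref{thm:imanishi} is for. You should state your conclusion in that form.
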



\begin{remark}\label{rem:sacksteder-regularity}
As Sacksteder himself observes in~\cite[p.~97]{Sacksteder65}, the bundle--like metric underlying his
construction is in general only continuous with respect to the original differentiable atlas on $M$.
Therefore, the closed $1$--form $\omega$ of Theorem~\ref{thm:sacksteder} is not a priori of class
$C^1$ in that atlas. The smooth differential calculus that we will apply below to $\omega$ --
Cartan's formula, smooth primitives on the universal cover, and Stokes' theorem -- is thus not
directly available in the original smooth structure of $M$. To overcome this difficulty we appeal
to the following theorem of H.~Imanishi.
\end{remark}

\begin{theorem}[\cite{Imanishi74}, Proposition~5.1]\label{thm:imanishi}
Let $M$ be a closed manifold endowed with a transversely orientable $C^r$ codimension--one foliation
$\F$ without holonomy, with $r\ge 2$. Then there exist a $C^r$ differentiable structure $\wt M$ on the
topological manifold underlying $M$, a $C^r$ foliation $\wt\F$ on $\wt M$, and a closed,
nowhere--vanishing $1$--form $\wt\omega$ of class $C^r$ on $\wt M$ defining $\wt\F$, such that:
\begin{enumerate}
\item the identity map $h:\wt M\to M$ is a homeomorphism, and
\item $h$ sends each leaf of $\wt\F$ diffeomorphically onto a leaf of $\F$.
\end{enumerate}
\end{theorem}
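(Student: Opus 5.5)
The plan is to start from the continuous closed form supplied by Theorem~\ref{thm:sacksteder}, use its transverse integrals as new transverse coordinates, and check that, in suitably adapted charts, all transition maps become $C^r$. Let $n=\dim M$. Since $\F$ is transversely orientable, $C^r$ and without holonomy, Theorem~\ref{thm:sacksteder} gives a continuous, nowhere--vanishing, leafwise smooth $1$--form $\omega$ with $d\omega=0$ distributionally and $\ker\omega=T\F$. We normalize its sign so that $\omega$ is positive on the transverse orientation. The difficulty is only transverse: $\omega$ is merely continuous in the transverse direction. We therefore aim to change only the transverse coordinate of the charts.

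\textbf{Step 1: a bifoliated atlas.} Choose a $C^r$ vector field $Y$ on $M$ that is everywhere transverse to $\F$ and positively oriented. Its orbits form a $1$--dimensional $C^r$ foliation $\mathcal{Y}$ transverse to $\F$. Near each point we build a $C^r$ chart $(x,t)\in\R^{n-1}\times I$ in which the plaques of $\F$ are $\{t=\mathrm{const}\}$ and the orbits of $Y$ are $\{x=\mathrm{const}\}$. To do this, take a $C^r$ foliated chart of $\F$ and straighten $Y$ along a plaque by flowing, or equivalently take the product of a plaque with a $Y$--orbit.

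Because transitions between such charts preserve both foliations, each transition has the product form $(x,t)\mapsto(g(x),h(t))$, with $g,h$ of class $C^r$. We shrink the charts to be small convex boxes, so that pairwise intersections are unions of sub-boxes.

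\textbf{Step 2: new transverse coordinate.} In a chart $U$, set
\[
\varphi_U(t)=\int_{\gamma_t}\omega ,
\]
where $\gamma_t$ is the $Y$--orbit segment from $\{t=t_0\}$ to $\{t\}$ inside $U$. Closedness of $\omega$ (in the distributional sense, applied via approximation on the box) shows that this integral does not depend on the orbit chosen, so $\varphi_U$ is a function of $t$ alone. Positivity of $\omega$ on $Y$ makes $\varphi_U$ a strictly increasing homeomorphism onto its image, and then $\omega|_U=d\varphi_U$ in the distributional sense. The new charts are $(x,s)=(x,\varphi_U(t))$.

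\textbf{Step 3: the transition maps.} On a connected component of $U\cap V$, both $\varphi_V\circ h$ and $\varphi_U$ are primitives of $\omega$ that are constant on plaques. They therefore differ by a constant $c$, and the transition becomes
\[
(x,s)\mapsto\bigl(g(x),\,s+c\bigr),
\]
which is $C^r$, indeed $C^\infty$ in $s$. Hence the new charts form a $C^r$ atlas $\wt M$, and the identity map $h:\wt M\to M$ is a homeomorphism. In these charts $\F$ is still given by $\{s=\mathrm{const}\}$, so it defines a $C^r$ foliation $\wt\F$. The leafwise coordinate $x$ is unchanged, so each leaf of $\wt\F$ is mapped diffeomorphically onto a leaf of $\F$. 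Finally, $\wt\omega:=ds$ in each chart is well defined because the transitions are translations in $s$, and it is $C^r$, closed, nowhere vanishing and defines $\wt\F$.

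\textbf{Main obstacle.} I expect the main obstacle to be Step 2, together with the constancy argument in Step 3: making rigorous that a merely continuous, distributionally closed form has path integrals along transversals that are invariant under sliding along plaques. I would first prove this in a single product box, by mollifying $\omega$ in the leafwise variables only and passing to the limit. This should suffice, since leafwise smoothness and transverse continuity give uniform convergence. Step 1 is standard. One must also check that no global obstruction arises. I expect none, since only local constants $c$ appear and the form $ds$ glues automatically.
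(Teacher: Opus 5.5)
The paper gives no proof of this statement; it quotes it from Imanishi. Your architecture is the right one, and Step~1 is the essential device. With an arbitrary foliated atlas the leafwise part of a transition, $(u,t)\mapsto (g(u,t),h(t))$, depends on $t$, so a non--smooth reparametrization of $t$ would destroy differentiability; fixing the transverse foliation by $Y$ decouples the variables.

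\textbf{The gap is the input to Step~2.} You take Theorem~\ref{thm:sacksteder} at face value, as the paper formulates it: a $1$--form that is continuous in the original atlas, with $\omega(Y)>0$ pointwise. But then in a bifoliated box $\omega=a(t)\,dt$ with $a$ continuous and positive, so $\varphi_U'=a$. Your new charts are therefore $C^1$--compatible with the old ones, and $h$ is a $C^1$ diffeomorphism. In particular, the holonomy of every $C^2$ codimension--one foliation without holonomy would be $C^1$--conjugate to translations.

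That is false. Suspend a $C^\infty$ circle diffeomorphism $f$ with irrational rotation number that is not $C^1$--conjugate to a rotation; such $f$ exist for suitable Liouville rotation numbers (Arnold, Herman). The resulting foliation of $\T^2$ has only line leaves, hence no holonomy. Yet a continuous closed defining form would restrict, on the fibre circle (a closed transversal with return map $f$), to an $f$--invariant density $\rho\,dy$ with $\rho>0$ continuous. Then $y\mapsto\int_0^y\rho$ would be a $C^1$ conjugacy of $f$ to a rotation. What Sacksteder's theorem really supplies is a holonomy--invariant transverse measure $\mu$, without atoms and positive on every nondegenerate transverse arc. Its local ``primitives'' are homeomorphisms, in general not $C^1$.

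\textbf{The repair leaves your Steps~1 and~3 intact.} Define $\varphi_U(t)$ as the signed $\mu$--measure of the $Y$--arc $\gamma_t$.
\begin{itemize}
\item Independence of the chosen orbit is immediate and needs no mollification: inside a bifoliated box, sliding $\{x_0\}\times[t_0,t]$ onto $\{x_1\}\times[t_0,t]$ along plaques is a holonomy map.
\item Continuity and strict monotonicity of $\varphi_U$ follow from the absence of atoms and from positivity.
\item On $U\cap V$ a $Y$--arc has the same $\mu$--measure in either chart, so $\varphi_V\circ h-\varphi_U$ is locally constant. The transitions are then $(x,s)\mapsto(g(x),s+c)$, exactly as you claim.
\end{itemize}
This yields the stated conclusion, with $h$ only a homeomorphism.

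The real difficulty is therefore not the analytic point you singled out as the main obstacle, but the existence of such a $\mu$, and this is where $C^2$ enters. Sacksteder's Denjoy--type theorem excludes exceptional minimal sets. Together with Reeb stability and Haefliger's theorem on compact leaves, the foliation is then either a fibration over $\s^1$ (take $\mu$ pulled back from the base) or minimal. In the minimal case, the invariant measure furnished by Sacksteder is automatically non--atomic and of full support.
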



\begin{remark}\label{rem:wlog-smooth}
Under the hypotheses of Theorem~\ref{main}, Theorem~\ref{thm:imanishi} applies with $r=2$. Since the
conclusion of Theorem~\ref{main} is purely topological and $h:\wt M\to M$ is a homeomorphism, the
topological type of $M$ coincides with that of $\wt M$, and the leaf structure of $\F$ is preserved
by $h$; replacing the triple $(M,\F,\omega)$ by $(\wt M,\wt\F,\wt\omega)$ thus does not affect the
conclusion. \emph{Henceforth we rename the Imanishi triple $(\wt M,\wt\F,\wt\omega)$ simply as
$(M,\F,\omega)$}: from this point on $M$ denotes the differentiable manifold supplied by
Theorem~\ref{thm:imanishi}, equipped with its $C^2$ differentiable structure, and $\omega$ denotes
a closed, nowhere--vanishing $1$--form of class $C^2$ whose kernel is the tangent distribution of
$\F$. All topological conclusions about the renamed manifold are, via the homeomorphism $h$,
equivalent to the corresponding conclusions about the original one.
\end{remark}

We now fix the closed $1$--form $\omega$ above and choose a $C^1$ vector field $X$ on $M$ satisfying
\(
\omega(X)=1;
\)
such an $X$ exists by a standard partition--of--unity argument: in a foliation chart adapted to $\F$,
the equation $\omega(X)=1$ admits a $C^1$ local solution transverse to the plaques, and these local
solutions can be patched together by a $C^\infty$ partition of unity subordinate to the chart cover.
The vector field $X$ is everywhere transverse to $\F$ and, since $M$ is compact and $X$ is Lipschitz,
its flow $\psi_t$ is defined for every $t\in\R$. The next lemma is the only dynamical input we need.

\begin{lemma}\label{lem:flow-preserves-foliation}
For every $t\in\R$,
\(
\psi_t^*\omega=\omega,
\)
and, in particular, $\psi_t$ preserves the foliation $\F$.
\end{lemma}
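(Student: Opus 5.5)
The plan is to differentiate $\psi_t^*\omega$ in $t$ and use Cartan's formula. Since $\omega$ is a $C^2$ closed $1$--form (Remark~\ref{rem:wlog-smooth}) and $X$ is $C^1$, the Lie derivative is available, and
\[
\mathcal L_X\omega = d(\iota_X\omega)+\iota_X d\omega = d(\omega(X)) + 0 = d(1)=0 .
\]
The flow identity $\frac{d}{dt}\psi_t^*\omega=\psi_t^*(\mathcal L_X\omega)$ then gives $\frac{d}{dt}\psi_t^*\omega=0$ for all $t$. Together with $\psi_0=\id$ this yields $\psi_t^*\omega=\omega$.

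The one delicate point is regularity. The flow of a $C^1$ field is only $C^1$ in space, so $\psi_t^*\omega$ is a $C^1$ form. Its $t$--derivative involves $D\psi_t$, which satisfies the variational equation $\frac{d}{dt}D\psi_t=DX\circ D\psi_t$; this is continuous because $X$ is $C^1$. So the identity $\frac{d}{dt}\psi_t^*\omega=\psi_t^*\mathcal L_X\omega$ holds pointwise without needing second derivatives of $\psi_t$.

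To avoid Cartan's formula for merely $C^1$ fields, I would instead verify the statement directly on curves. Fix a $C^1$ curve $\gamma:[0,1]\to M$ and consider the map
\[
\Gamma:[0,t]\times[0,1]\to M,\qquad \Gamma(s,u)=\psi_s(\gamma(u)).
\]
Since $\omega$ is closed and $C^2$, Stokes' theorem on this rectangle (pulling back $d\omega=0$) gives
\[
\int_{\psi_t\circ\gamma}\omega-\int_\gamma\omega=\int_{\psi_\cdot(\gamma(1))}\omega-\int_{\psi_\cdot(\gamma(0))}\omega .
\]
Both flow lines on the right have $\omega$--integral exactly $t$, because $\omega(X)=1$. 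The right side is therefore zero. Since this holds for every curve $\gamma$, we get $\psi_t^*\omega=\omega$.

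The application of Stokes to a $C^1$ singular square is the main technical step. I would justify it by approximating $\omega$ locally by exact forms $dg$ in simply connected charts, using the Poincaré lemma, where the identity is just the fundamental theorem of calculus.

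Finally, $\psi_t^*\omega=\omega$ means $\omega(D\psi_t v)=\omega(v)$ for every tangent vector $v$. Hence $D\psi_t$ maps $\ker\omega=T\F$ into $\ker\omega$, and, being invertible, maps it onto $\ker\omega$. So $\psi_t$ sends integral manifolds of $T\F$ to integral manifolds. Since $\psi_t$ is a diffeomorphism and leaves are the maximal connected integral manifolds, $\psi_t$ maps leaves onto leaves, i.e.\ it preserves $\F$.
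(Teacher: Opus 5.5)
Your proposal is correct. Its first paragraph is exactly the paper's proof: Cartan's formula gives $\mathcal L_X\omega=d(\omega(X))+\iota_X d\omega=0$, hence $\psi_t^*\omega=\omega$, hence $D\psi_t$ preserves $\ker\omega=T\F$. The argument you actually settle on is genuinely different, however. Instead of differentiating in $t$, you integrate: you apply Stokes to the square $\Gamma(s,u)=\psi_s(\gamma(u))$ swept out by the flow, whose two flow-line sides both have $\omega$-integral $t$ and cancel.

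This integrated version never differentiates $X$ or $\omega$. It needs only that $\Gamma$ is $C^1$ and that $\omega$ is locally exact. No approximation is needed for the latter. By the Poincar\'e lemma, $\omega=dg$ exactly on small balls. After subdividing the square by a Lebesgue number, the boundary integral telescopes to values of $g$ at corners. So your route demands less regularity of $\omega$ (continuity plus local exactness suffices), and it makes the role of closedness transparent as a homotopy-invariance statement.

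The paper's route is a one-liner, but it tacitly uses $\frac{d}{dt}\psi_t^*\omega=\psi_t^*\mathcal L_X\omega$ for a field that is only $C^1$. Your variational-equation remark already justifies this. A coordinate check, $X^j\partial_j\omega_i+\omega_j\partial_iX^j=\partial_i(\omega_jX^j)+X^j(\partial_j\omega_i-\partial_i\omega_j)$, shows that Cartan's formula holds pointwise for $C^1$ data. So in the paper's setting the detour is not actually forced.

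One small slip: $\psi_t^*\omega$ is only $C^0$, not $C^1$, since $D\psi_t$ is merely continuous. This affects neither argument.
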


\begin{proof}
Since $d\omega=0$ and $\omega(X)=1$, Cartan's formula gives
\[
\mathcal L_X\omega=d(\iota_X\omega)+\iota_X(d\omega)=d(1)+\iota_X(0)=0,
\]
so $\psi_t^*\omega=\omega$ for every $t\in\R$. Since $\ker\omega=T\F$, the flow preserves the tangent
distribution of the foliation and, in particular, the leaves.
\end{proof}

A standard flow--box argument now shows that, around every point of $M$, one can choose coordinates
in which the leaves are horizontal plaques and the flow lines of $X$ are vertical segments. We will
use this local normal form repeatedly, without further mention.

\section{The universal cover and the global product structure}\label{sec:global}

We now lift the picture to the universal cover. The closed $1$--form becomes exact, and its
primitive gives a global coordinate transverse to the lifted foliation.

Let $\pi:\wt M\to M$ be the universal covering map, let $\wt\F$ be the lifted foliation, and set
\(
\wt\omega:=\pi^*\omega.
\)
Since $\omega$ is closed and nowhere vanishing, so is $\wt\omega$.

\begin{lemma}\label{lem:primitive}
There exists a function $f:\wt M\to\R$ of class $C^3$ such that
\(
df=\wt\omega.
\)
Moreover, $df$ never vanishes, so $f$ is a submersion.
\end{lemma}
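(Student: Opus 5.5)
The plan is to build $f$ directly by path integration and then read off its regularity from the formula $df=\wt\omega$. Fix a basepoint $\wt x_0\in\wt M$. For $p\in\wt M$ set
\[
f(p):=\int_{\gamma}\wt\omega,
\]
where $\gamma$ is any piecewise $C^1$ path in $\wt M$ from $\wt x_0$ to $p$. The first step is to show this is independent of $\gamma$. Given two such paths, their concatenation (one reversed) is a closed loop. Since $\wt M$ is simply connected, this loop is null-homotopic, and we may take the homotopy piecewise smooth after a standard approximation. Because $\wt\omega$ is closed and of class $C^2$, Stokes' theorem on the homotopy square (or, equivalently, homotopy invariance of line integrals of closed $C^1$ forms) shows that the integral over the loop vanishes. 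Here it matters that we are working in the Imanishi structure of Remark~\ref{rem:wlog-smooth}: there $\omega$, hence $\wt\omega=\pi^*\omega$, is genuinely $C^2$ and $d\wt\omega=0$ holds pointwise, so classical Stokes applies.

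The second step is local. Around any point of $\wt M$, take a coordinate ball $B$, which is contractible. By the Poincaré lemma for closed $C^2$ forms, $\wt\omega|_B=dg$ for some $g$ given by radial integration. On $B$, the function $f$ differs from $g$ by a constant, since both are path integrals of $\wt\omega$ from a fixed point. Hence $df=\wt\omega$ on $B$, and therefore on all of $\wt M$.

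For regularity, the partial derivatives of $f$ in local coordinates are the coefficients of $\wt\omega$, which are $C^2$. So $f$ is $C^3$, which is the claim. I would state explicitly that this is measured in the $C^2$ atlas on $\wt M$ lifted from $M$. In that atlas, "$C^3$" should be read as "$f$ has $C^2$ differential"; the point is to use only $C^1$ of $f$ later. Finally, $df_p=\wt\omega_p=\omega_{\pi(p)}\circ d\pi_p$ is nonzero, because $\omega$ is nowhere vanishing and $d\pi_p$ is an isomorphism. Thus $df$ never vanishes, and $f$ is a submersion onto an open subset of $\R$.

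The main obstacle is the regularity bookkeeping rather than the topology. The path-independence argument needs a smooth null-homotopy, and $d\wt\omega=0$ must be used in the classical sense, which is exactly why the paper passed to the Imanishi structure. I would first handle this by approximating the continuous null-homotopy by a $C^2$ one, which is possible in a $C^2$ manifold. Alternatively, I would avoid homotopies entirely: cover the image of a homotopy by finitely many coordinate balls and use the local primitives $g$ together with a Lebesgue-number subdivision argument to show that the loop integral vanishes.
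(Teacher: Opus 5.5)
Your proof is correct. It differs from the paper's only in that you prove by hand the fact the paper cites.

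The paper argues in one line: $\wt M$ is simply connected, so $H^1(\wt M;\R)=0$, and by the de Rham isomorphism every closed $C^2$ $1$--form on $\wt M$ is exact; any primitive will do. You instead construct the primitive explicitly as $f(p)=\int_{\wt x_0}^{p}\wt\omega$. You get path independence from $\pi_1(\wt M)=0$ together with homotopy invariance of line integrals of closed forms, and you identify $df=\wt\omega$ through local Poincar\'e--lemma primitives.

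The paper's version is shorter, but it relies on de Rham's theorem, whose standard statement is for $C^\infty$ manifolds and forms. Here one is working in the $C^2$ Imanishi structure. Your route needs only homotopy invariance of line integrals of a closed $C^1$ form, and your Lebesgue--number variant with local primitives does not even require smoothing the null--homotopy. It also exhibits $f$ directly as the period function $p\mapsto\int_{\wt x_0}^{p}\wt\omega$, which is the viewpoint taken up again in Proposition~\ref{prop:periods-agree}.

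Your regularity caveat is also sound. With only $C^2$ transition maps, ``$C^3$'' for $f$ (like ``$C^2$'' for $\wt\omega$) is a chartwise statement relative to the given atlas. What the later sections actually use is only that $f$ is a $C^1$ submersion with $df=\wt\omega$, and your argument provides exactly that, as does the paper's.
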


\begin{proof}
By Remark~\ref{rem:wlog-smooth}, $M$ carries a $C^2$ differentiable structure in which $\omega$ is
a closed $C^2$ $1$--form; hence $\wt M$ is a $C^2$ manifold and $\wt\omega$ is a $C^2$ closed
$1$--form on $\wt M$. Since $\wt M$ is simply connected, the de Rham isomorphism gives
\[
H^1_{\mathrm{dR}}(\wt M;\R)=H^1(\wt M;\R)=0,
\]
so every closed $C^2$ $1$--form on $\wt M$ is exact. Let $f:\wt M\to\R$ be a primitive,
$\wt\omega=df$, of class $C^3$. Since $\wt\omega$ is nowhere zero, neither is $df$.
\end{proof}

Let $\wt X$ denote the lift of $X$, and let $\wt\psi_t$ be its flow.

\begin{lemma}\label{lem:f-linear-along-flow}
For every $p\in\wt M$ and every $t\in\R$,
\[
\frac{d}{dt}f(\wt\psi_t(p))=1,
\qquad\text{so}\qquad
f(\wt\psi_t(p))=f(p)+t.
\]
\end{lemma}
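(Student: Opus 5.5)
The plan is a direct chain-rule computation along a single flow line. The two ingredients are the defining property $df=\wt\omega$ from Lemma~\ref{lem:primitive} and the normalization $\omega(X)=1$, lifted to $\wt M$.

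First I check that the lifted objects behave as expected. Since $\pi$ is a local diffeomorphism and $\wt X$ is the lift of $X$, we have $d\pi(\wt X)=X\circ\pi$. Hence
\[
\wt\omega(\wt X)=(\pi^*\omega)(\wt X)=\omega(d\pi\,\wt X)=\omega(X)\circ\pi\equiv 1
\]
on $\wt M$. Next, the flow $\wt\psi_t$ is defined for all $t\in\R$. The reason is that $\pi\circ\wt\psi_t$ is an integral curve of $X$, and by uniqueness of lifts it is the lift of $t\mapsto\psi_t(\pi(p))$. Since $\psi_t$ is complete on the compact manifold $M$, the lifted integral curve exists for all time. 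Said differently, $\wt\psi_t$ is the unique lift of $\psi_t$ with $\wt\psi_0=\id$, and it satisfies $\pi\circ\wt\psi_t=\psi_t\circ\pi$.

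Fix $p$ and set $\gamma(t)=\wt\psi_t(p)$, a $C^1$ curve with $\gamma'(t)=\wt X(\gamma(t))$. Since $f$ is $C^1$ (in fact $C^3$), the chain rule gives
\[
\frac{d}{dt}f(\gamma(t))=df_{\gamma(t)}(\gamma'(t))=\wt\omega_{\gamma(t)}\bigl(\wt X(\gamma(t))\bigr)=1.
\]
Integrating from $0$ to $t$ and using $\gamma(0)=p$ yields $f(\wt\psi_t(p))=f(p)+t$.

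I expect no real obstacle here. The only point needing care is completeness of the lifted flow, which I would settle through the lifting argument above rather than through compactness, since $\wt M$ need not be compact. The regularity of $f$ and $X$ is already sufficient for the chain rule.
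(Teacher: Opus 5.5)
Your proof is correct and follows the paper's argument: apply the chain rule with $df=\wt\omega$ and $\wt\omega(\wt X)=\omega(X)\circ\pi\equiv 1$, then integrate. Your extra justification that the lifted flow $\wt\psi_t$ is complete fills in a point the paper leaves implicit. You lift the complete integral curves of $X$ on the compact manifold $M$, which is needed because $\wt M$ itself need not be compact.
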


\begin{proof}
By the chain rule and the identity $df=\wt\omega$,
\[
\frac{d}{dt}f(\wt\psi_t(p))
=df_{\wt\psi_t(p)}\bigl(\wt X(\wt\psi_t(p))\bigr)
=\wt\omega(\wt X).
\]
Since $\wt X$ is the lift of $X$ and $\wt\omega=\pi^*\omega$, we have
\(
\wt\omega(\wt X)=\omega(X)\circ\pi\equiv 1,
\)
and integration yields the formula.
\end{proof}

Fix a leaf $\wt L_0$ of $\wt\F$; equivalently, $\wt L_0$ is a connected component of a level set of $f$.

\begin{proposition}[Global product]\label{prop:global-product}
Set
\[
\Phi:\R\times \wt L_0\longrightarrow \wt M,
\qquad
\Phi(t,x)=\wt\psi_t(x).
\]
Then $\Phi$ is a diffeomorphism. In particular,
\(
\wt M\cong \R\times \wt L_0,
\)
and, up to addition of a constant, $f$ is the projection to the first factor.
\end{proposition}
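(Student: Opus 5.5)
The plan is to show that $\Phi$ is a local diffeomorphism, that it is injective, and that it is surjective. Two facts already in hand drive everything. By Lemma~\ref{lem:flow-preserves-foliation}, lifted to $\wt M$, each $\wt\psi_t$ permutes the leaves of $\wt\F$. By Lemma~\ref{lem:f-linear-along-flow}, $f\circ\Phi(t,x)=f(x)+t=c_0+t$, where $c_0:=f|_{\wt L_0}$ is constant because $df$ vanishes on $T\wt\F$ and $\wt L_0$ is connected. This already gives the final clause: $f-c_0$ is the projection onto the first factor.

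\textbf{Local diffeomorphism.} At $(t,x)$ the differential of $\Phi$ sends $\partial_t$ to $\wt X(\wt\psi_t(x))$. It sends $T_x\wt L_0$ to $d\wt\psi_t(T_x\wt L_0)$, which lies in $\ker\wt\omega$ because $\wt\psi_t^*\wt\omega=\wt\omega$. Since $\wt\omega(\wt X)=1$, these two pieces span $T\wt M$. So $d\Phi$ is an isomorphism and, by the inverse function theorem, $\Phi$ is a local $C^1$ diffeomorphism.

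\textbf{Injectivity.} Suppose $\Phi(t,x)=\Phi(s,y)$. Applying $f$ gives $t=s$, and then $x=y$ because $\wt\psi_t$ is injective.

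\textbf{Surjectivity.} This is the main obstacle. The image $U=\Phi(\R\times\wt L_0)$ is open by the local diffeomorphism property, so since $\wt M$ is connected it suffices to show that $U$ is closed.

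Let $p_n=\wt\psi_{t_n}(x_n)\to p$. Then $t_n=f(p_n)-c_0\to t^*:=f(p)-c_0$. Set $q:=\wt\psi_{-t^*}(p)$; it suffices to show $q\in\wt L_0$. By continuity of the flow, $q_n:=\wt\psi_{-t^*}(p_n)\to q$, and $q_n=\wt\psi_{t_n-t^*}(x_n)$ with $t_n-t^*\to0$. Since $x_n=\wt\psi_{t^*-t_n}(q_n)$, also $x_n\to q$. Thus $q\in\overline{\wt L_0}$, and $f(q)=c_0$.

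It remains to show the leaf $\wt L_0$ is closed in $\wt M$. Take a flow box around $q$, that is, coordinates $(s,y)\in(-\varepsilon,\varepsilon)\times B$ in which $f=f(q)+s$ and the flow lines are vertical. This is available from the local normal form together with Lemma~\ref{lem:f-linear-along-flow}. In such a box, $\{f=c_0\}$ is the single connected plaque $\{0\}\times B$. Since $x_n\in\wt L_0$ enters the box and has $f(x_n)=c_0$, it lies on that plaque. The plaque is connected and contained in the level set, so it belongs to the same leaf as $x_n$, namely $\wt L_0$. Hence $q\in\wt L_0$, and $p=\wt\psi_{t^*}(q)\in U$.

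The key point that must be checked carefully is that the flow box stays inside $\wt M$ with $f$ as its vertical coordinate. This is exactly what rules out a leaf accumulating on itself (no recurrence), and it is where exactness of $\wt\omega$ is used.

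Consequently $\Phi$ is a bijective local diffeomorphism, hence a diffeomorphism, of class $C^1$, which matches the regularity of the flow. This yields $\wt M\cong\R\times\wt L_0$.
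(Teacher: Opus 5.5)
Your proof is correct and follows the same plan as the paper. You show $\Phi$ is a local diffeomorphism, get injectivity by applying $f$, and get surjectivity from an open--closed argument on the flow-saturation of $\wt L_0$ in the connected manifold $\wt M$. The only difference is how closedness of that saturation is obtained. The paper gets it for free by partitioning $\wt M$ into the open, pairwise disjoint flow-saturations of all components of $f^{-1}(c)$. You instead prove it directly by a limit argument, using an $f$-adapted flow box to show that $\wt L_0$ is closed.
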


\begin{proof}
By Lemma~\ref{lem:flow-preserves-foliation}, $\wt X$ is transverse to $\wt\F$ and preserves it, so
$\Phi$ is a local diffeomorphism in adapted flow--box coordinates. We show that it is in fact a
global one.

\smallskip
\emph{Surjectivity.} Fix $p\in\wt M$ and set $c=f(\wt L_0)$ and $t=f(p)-c$. By
Lemma~\ref{lem:f-linear-along-flow},
\(
f(\wt\psi_{-t}(p))=f(p)-t=c,
\)
so $q:=\wt\psi_{-t}(p)$ lies in the level set $f^{-1}(c)$. It remains to show that $f^{-1}(c)$ is
connected. Let $\{C_\alpha\}_{\alpha\in A}$ be the family of connected components of $f^{-1}(c)$, and
set
\[
U_\alpha:=\bigcup_{s\in\R}\wt\psi_s(C_\alpha).
\]
Each $U_\alpha$ is open: every point $\wt\psi_s(x)$ with $x\in C_\alpha$ admits a flow--box
neighbourhood $V$ in which $f$ is the projection to the transverse coordinate, so any nearby point
$y\in V$ lies on a flow line issuing from the same plaque component as $x$, and hence from
$C_\alpha$. The $U_\alpha$ are pairwise disjoint, invariant under the flow, and their union is all
of $\wt M$; thus each of them is also closed. Since $\wt M$ is connected, there is only one such
set, and $f^{-1}(c)$ is connected. Therefore $q\in\wt L_0$ and $p=\Phi(t,q)$.

\smallskip
\emph{Injectivity.} Suppose that $\wt\psi_t(x)=\wt\psi_s(y)$ with $x,y\in\wt L_0$. Applying $f$ and
using Lemma~\ref{lem:f-linear-along-flow},
\[
f(\wt L_0)+t=f(\wt L_0)+s,
\]
so $t=s$; and since a flow is injective at each fixed time, also $x=y$.

\smallskip
Finally, the identity
\[
f(\Phi(t,x))=f(\wt\psi_t(x))=f(x)+t=f(\wt L_0)+t
\]
shows that $f$ is, up to an additive constant, the projection onto the $\R$--factor.
\end{proof}

\begin{corollary}\label{cor:univcover-R4}
Every leaf of $\wt\F$ is diffeomorphic to $\R^3$, and
\(
\wt M\cong \R\times \R^3\cong \R^4.
\)
In particular, $M$ is aspherical.
\end{corollary}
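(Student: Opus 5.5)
The corollary should follow quickly from Proposition~\ref{prop:global-product}, once we know that $\wt L_0$ is a leaf diffeomorphic to $\R^3$.

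\emph{Step 1: leaves of $\wt\F$ are copies of leaves of $\F$.} Let $\wt L$ be a leaf of $\wt\F$ and $L=\pi(\wt L)$ the leaf of $\F$ containing it. The restriction $\pi|_{\wt L}:\wt L\to L$ is a covering map. To see this, take an evenly covered neighbourhood $U$ of a point of $L$ that is also a foliation chart. Each sheet of $\pi^{-1}(U)$ is mapped diffeomorphically onto $U$ and carries plaques to plaques. Hence the connected components of $\wt L\cap\pi^{-1}(U')$, where $U'$ is a plaque of $L$, map diffeomorphically onto $U'$. After Remark~\ref{rem:wlog-smooth}, each leaf $L$ of the renamed foliation is diffeomorphic to a leaf of the original one via $h$, so $L\cong\R^3$ is simply connected. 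A connected covering of a simply connected manifold is a diffeomorphism, so $\wt L\cong L\cong\R^3$. Note that the leaves are only $C^2$-diffeomorphic to $\R^3$ after the Imanishi change of structure. This is harmless, since $C^2$ structures determine a unique smooth structure up to diffeomorphism, and for the asphericity and topological conclusions only the topological type matters.

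\emph{Step 2: product structure.} Apply Proposition~\ref{prop:global-product}. It gives a diffeomorphism $\Phi:\R\times\wt L_0\to\wt M$, hence $\wt M\cong\R\times\R^3\cong\R^4$.

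\emph{Step 3: asphericity.} Since $\wt M\cong\R^4$ is contractible, all its homotopy groups vanish. The covering projection $\pi$ induces isomorphisms $\pi_k(\wt M)\to\pi_k(M)$ for $k\ge2$. Therefore $\pi_k(M)=0$ for $k\ge 2$, i.e.\ $M$ is aspherical (a $K(\pi_1(M),1)$). Asphericity is a homotopy invariant, so this transfers through $h$ back to the original manifold.

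The only point needing care is Step~1, namely that the restriction of $\pi$ to a leaf of the lifted foliation is a genuine covering of the underlying leaf, rather than merely a local diffeomorphism. I would handle it with the chart argument above. Alternatively, one can observe that path-lifting of leafwise paths through $\pi$ stays inside a single leaf, because lifts of leafwise paths are tangent to $\wt\F$.
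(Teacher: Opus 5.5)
Your proposal is correct and follows the paper's proof essentially step for step. You show that $\pi|_{\wt L}$ is a covering of the simply connected leaf $L\cong\R^3$, hence a one-sheeted covering and so a $C^2$ diffeomorphism; you then apply Proposition~\ref{prop:global-product} and conclude asphericity from the contractibility of $\wt M\cong\R^4$. Your extra care over the paper (the chart and path-lifting justification that $\pi|_{\wt L}$ is a covering, and the transfer through $h$) is welcome but does not change the route.
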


\begin{proof}
Let $L$ be a leaf of $\F$ and let $\wt L$ be a lifted leaf lying over $L$. The restriction
$\pi|_{\wt L}:\wt L\to L$ is a covering map, and since $L\cong\R^3$ is simply connected, this
covering is one--sheeted -- in particular, a homeomorphism. Since $\pi$ is smooth in the Imanishi
$C^2$ structure, and both $\wt L$ and $L$ inherit smooth submanifold structures from $\wt M$ and
$M$ respectively, $\pi|_{\wt L}$ is in fact a $C^2$ diffeomorphism. Hence every lifted leaf is
diffeomorphic to $\R^3$, and Proposition~\ref{prop:global-product} gives
\(
\wt M\cong \R\times \R^3\cong \R^4.
\)
In particular, $\wt M$ is contractible and $M$ is aspherical.
\end{proof}

\section{Deck transformations, periods and transverse loops}\label{sec:periods}

The next step is to relate the transverse coordinate $f$ on the universal cover with the period
homomorphism defined by $\omega$. This gives a clean proof that $\pi_1(M)$ is free abelian, and
also recovers the transverse loop construction in a form close in spirit to Rosenberg's theorem
on closed transverse representatives~\cite[Theorem~5]{Rosenberg68}.

Fix a basepoint $x_0\in M$ and a lift $\wt x_0\in\wt M$ with $\pi(\wt x_0)=x_0$, and let
\(
\Gamma:=\Deck(\wt M/M)
\). For each deck transformation $\gamma\in\Gamma$, define
\[
c(\gamma):=f(\gamma(p))-f(p),
\]
where $p\in\wt M$ is arbitrary.

\begin{lemma}\label{lem:deck-period}
The quantity $c(\gamma)$ is independent of $p$ and defines a homomorphism
\(
c:\Gamma\longrightarrow (\R,+).
\)
Moreover,
\[
f\circ\gamma=f+c(\gamma),
\qquad\text{and hence}\qquad
\gamma\bigl(f^{-1}(t)\bigr)=f^{-1}\bigl(t+c(\gamma)\bigr)
\]
for every $t\in\R$.
\end{lemma}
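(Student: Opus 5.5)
The plan is to show that the function $p\mapsto f(\gamma(p))-f(p)$ has vanishing differential on $\wt M$, and is therefore constant because $\wt M$ is connected. The key input is that deck transformations preserve $\wt\omega=\pi^*\omega$. Since $\pi\circ\gamma=\pi$, we have
\[
\gamma^*\wt\omega=\gamma^*\pi^*\omega=(\pi\circ\gamma)^*\omega=\pi^*\omega=\wt\omega .
\]
Then $d(f\circ\gamma)=\gamma^*df=\gamma^*\wt\omega=\wt\omega=df$. So $g_\gamma:=f\circ\gamma-f$ is a $C^1$ function on $\wt M$ with $dg_\gamma\equiv 0$. Because $\wt M$ is connected, $g_\gamma$ is constant; this constant is $c(\gamma)$, and it does not depend on $p$. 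This gives the identity $f\circ\gamma=f+c(\gamma)$ directly.

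For the homomorphism property, take $\gamma,\delta\in\Gamma$ and evaluate at $p$:
\[
f(\gamma\delta(p))-f(p)=\bigl[f(\gamma(\delta p))-f(\delta p)\bigr]+\bigl[f(\delta p)-f(p)\bigr]=c(\gamma)+c(\delta),
\]
where the first bracket equals $c(\gamma)$ by independence of the base point. The same computation with $\gamma=\id$ gives $c(\id)=0$, which is consistent. The group operation here is composition of deck transformations, and the target group $\R$ is abelian, so there is no ordering issue.

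For the level sets, the identity $f\circ\gamma=f+c(\gamma)$ gives that $f(x)=t$ if and only if $f(\gamma x)=t+c(\gamma)$. Hence $\gamma(f^{-1}(t))\subseteq f^{-1}(t+c(\gamma))$. Applying the same argument to $\gamma^{-1}$, and using $c(\gamma^{-1})=-c(\gamma)$, gives the reverse inclusion, so $\gamma(f^{-1}(t))=f^{-1}(t+c(\gamma))$.

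The only point that needs care, and the main obstacle such as it is, lies in regularity. In the Imanishi $C^2$ structure, deck transformations are $C^2$ diffeomorphisms, because $\pi$ is a $C^2$ local diffeomorphism. So the pullback and chain rule computations are legitimate, and $f\circ\gamma$ is $C^1$ with the stated differential. If one wished to avoid differentials altogether, there is an alternative. Join $p$ to $\gamma(p)$ by a path $\beta$. Then $c(\gamma)=\int_\beta\wt\omega=\int_{\pi\circ\beta}\omega$, the period of the loop $\pi\circ\beta$. Changing $p$ to $q$ alters the loop by a free homotopy, and the integral of the closed form $\omega$ is invariant under such homotopies. I would present the differential argument as the main proof and mention this period interpretation, since it is what the section goes on to use.
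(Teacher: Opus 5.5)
Your proof is correct and follows the paper's own argument: you use $\pi\circ\gamma=\pi$ to get $\gamma^*\wt\omega=\wt\omega$, hence $d(f\circ\gamma)=df$, so $f\circ\gamma-f$ is constant on the connected $\wt M$. The homomorphism property and the level-set identity then follow directly from $f\circ\gamma=f+c(\gamma)$; your two-inclusion treatment of the level sets just spells out what the paper calls immediate.
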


\begin{proof}
Since $\pi\circ\gamma=\pi$, we have $\gamma^*(\pi^*\omega)=\pi^*\omega$; by Lemma~\ref{lem:primitive},
this reads
\[
d(f\circ\gamma)=\gamma^*(df)=df.
\]
Hence $f\circ\gamma-f$ is locally constant on the connected manifold $\wt M$, and therefore constant.
This constant is $c(\gamma)$, and the formula $f\circ\gamma=f+c(\gamma)$ follows; the statement
about level sets is immediate.

For $\gamma_1,\gamma_2\in\Gamma$ we then have
\[
f\circ(\gamma_1\gamma_2)
=(f\circ\gamma_1)\circ\gamma_2
=(f+c(\gamma_1))\circ\gamma_2
=f+c(\gamma_2)+c(\gamma_1),
\]
which shows that $c(\gamma_1\gamma_2)=c(\gamma_1)+c(\gamma_2)$.
\end{proof}

\begin{lemma}\label{lem:deck-period-injective}
If $c(\gamma)=0$, then $\gamma=\id_{\wt M}$. In particular, the homomorphism
\(
c:\Gamma\hookrightarrow \R
\)
is injective.
\end{lemma}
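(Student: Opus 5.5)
The plan is to show that a deck transformation with zero period must preserve a single lifted leaf, and then use the fact, established in Corollary~\ref{cor:univcover-R4}, that each lifted leaf maps injectively to its image in $M$. That forces $\gamma$ to have a fixed point, and a deck transformation with a fixed point is the identity.

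First, suppose $c(\gamma)=0$. By Lemma~\ref{lem:deck-period} we have $f\circ\gamma=f$, so $\gamma$ maps each level set $f^{-1}(t)$ onto itself. In the surjectivity part of the proof of Proposition~\ref{prop:global-product} we showed that $f^{-1}(c)$ is connected. That argument used nothing special about the value $c$, so every level set of $f$ is connected. Since the leaves of $\wt\F$ are the connected components of level sets of $f$, each level set $f^{-1}(t)$ is exactly one leaf $\wt L_t$ of $\wt\F$. Hence $\gamma(\wt L_t)=\wt L_t$ for every $t$.

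Next, fix any $p\in\wt L_t$ and set $L:=\pi(\wt L_t)$, a leaf of $\F$. By the proof of Corollary~\ref{cor:univcover-R4}, the restriction $\pi|_{\wt L_t}:\wt L_t\to L$ is a covering of the simply connected leaf $L\cong\R^3$, hence one--sheeted, and in particular injective. Both $p$ and $\gamma(p)$ lie in $\wt L_t$, and $\pi(\gamma(p))=\pi(p)$ because $\gamma$ is a deck transformation. Injectivity of $\pi|_{\wt L_t}$ then gives $\gamma(p)=p$.

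Finally, a deck transformation of the connected covering $\pi:\wt M\to M$ that fixes a point is the identity, by uniqueness of lifts. Therefore $\gamma=\id_{\wt M}$. Since $c$ is a homomorphism by Lemma~\ref{lem:deck-period}, trivial kernel means $c$ is injective.

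The only delicate step is identifying each level set of $f$ with a single lifted leaf, so that $\gamma$ really preserves one leaf rather than permuting components. This is handled by reusing the flow--invariance argument of Proposition~\ref{prop:global-product} at an arbitrary level $t$. Everything else is formal.
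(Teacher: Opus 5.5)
Your proof is correct and follows essentially the same route as the paper: $c(\gamma)=0$ forces $\gamma$ to preserve each level set of $f$, which is a single lifted leaf mapping injectively onto a simply connected leaf of $\F$, so $\gamma$ has a fixed point and is therefore the identity. Your explicit check that every level set of $f$ is connected, by rerunning the flow argument of Proposition~\ref{prop:global-product} at an arbitrary level, makes precise a step the paper simply asserts.
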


\begin{proof}
Suppose that $c(\gamma)=0$. By Lemma~\ref{lem:deck-period}, $\gamma$ preserves each level set of $f$.
Every such level set is a lifted leaf $\wt L$ which, by Corollary~\ref{cor:univcover-R4}, is
diffeomorphic to $\R^3$; in particular, $\pi|_{\wt L}:\wt L\to L$ is a covering of a simply connected
manifold, hence a bijection. Take any $p\in\wt L$: both $p$ and $\gamma(p)$ lie in $\wt L$ and
satisfy $\pi(\gamma(p))=\pi(p)$, so the injectivity of $\pi|_{\wt L}$ forces $\gamma(p)=p$. Thus
$\gamma$ has a fixed point, and since the deck group acts freely on $\wt M$, this forces
$\gamma=\id_{\wt M}$.
\end{proof}

We now turn to the period homomorphism on the fundamental group. Since $d\omega=0$, Stokes' theorem
implies that the integral of $\omega$ along a based loop depends only on its homotopy class, and so
we obtain a homomorphism
\[
I_\omega:\pi_1(M,x_0)\longrightarrow\R,
\qquad
I_\omega([\beta])=\int_\beta \omega.
\]

\begin{lemma}[Stokes principle]\label{lem:stokes}
The map $I_\omega$ is well defined and is a group homomorphism.
Equivalently, if two based loops are homotopic rel.
basepoint, then they have the same $\omega$--period.
\end{lemma}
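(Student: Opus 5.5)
The plan is to prove that the integral depends only on the homotopy class rel.\ basepoint by lifting to the universal cover, where $\wt\omega=df$ is exact; this avoids any need for a Stokes argument on a (possibly only continuous) singular homotopy square. For a based piecewise $C^1$ loop $\beta:[0,1]\to M$ at $x_0$, let $\wt\beta$ be its lift starting at $\wt x_0$. Since $\pi^*\omega=\wt\omega=df$ (Lemma~\ref{lem:primitive}), the fundamental theorem of calculus gives
\[
\int_\beta\omega=\int_{\wt\beta}\wt\omega=f\bigl(\wt\beta(1)\bigr)-f(\wt x_0).
\]
Write $\gamma_{[\beta]}$ for the deck transformation with $\gamma_{[\beta]}(\wt x_0)=\wt\beta(1)$. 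Then the right-hand side equals $f(\gamma_{[\beta]}(\wt x_0))-f(\wt x_0)=c(\gamma_{[\beta]})$ by Lemma~\ref{lem:deck-period}.

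If $\beta_0\simeq\beta_1$ rel.\ basepoint, the homotopy lifting property shows that their lifts from $\wt x_0$ have the same endpoint. Hence the two periods coincide, and $I_\omega$ is well defined on $\pi_1(M,x_0)$. To cover loops that are merely continuous, I would first note that every class contains a piecewise $C^1$ representative. Alternatively, one can simply define $I_\omega([\beta]):=f(\wt\beta(1))-f(\wt x_0)$, which makes sense for continuous loops, and observe that it agrees with the integral whenever $\beta$ is piecewise $C^1$.

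For the homomorphism property, I would use that the lift of a concatenation $\beta_1\ast\beta_2$ is $\wt\beta_1$ followed by $\gamma_{[\beta_1]}\circ\wt\beta_2$. Consequently $[\beta]\mapsto\gamma_{[\beta]}$ is the standard isomorphism $\pi_1(M,x_0)\to\Gamma$, with the usual convention making it a homomorphism. Then $I_\omega=c\circ(\text{this isomorphism})$ is a composition of homomorphisms, using Lemma~\ref{lem:deck-period}. One can also check this directly: the integral is additive under concatenation, and $f\circ\gamma_{[\beta_1]}-f$ is the constant $c(\gamma_{[\beta_1]})$. Since $\R$ is abelian, the order convention causes no issue.

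The only delicate point is regularity. The argument requires $\omega$ to be at least continuous and $f$ to be $C^1$ with $df=\wt\omega$, and both hold in the Imanishi structure of Remark~\ref{rem:wlog-smooth}. The lifting route therefore sidesteps the need for a smooth homotopy in Stokes' theorem, which I expect to be the main obstacle if one argued directly on $M$.
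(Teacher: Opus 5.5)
Your proof is correct, but it takes a different route from the paper's.

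\textbf{The paper's route.} The paper works directly on $M$. It approximates the rel.\ basepoint homotopy $H$ by a piecewise smooth square with the same boundary. It then applies Stokes' theorem to $H^*\omega$ using $d\omega=0$, which makes sense classically because $\omega$ is $C^2$ in the Imanishi structure. Additivity under concatenation is read off from line integrals.

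\textbf{Your route.} You pass to $\wt M$ and use the primitive $f$ of Lemma~\ref{lem:primitive} to write the period as $f(\wt\beta(1))-f(\wt x_0)$. Well-definedness then follows from the homotopy lifting property, and the homomorphism property from Lemma~\ref{lem:deck-period}. This is not circular in the paper's order, since Lemmas~\ref{lem:primitive} and~\ref{lem:deck-period} are proved without any reference to periods.

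\textbf{What your approach buys.} Continuous homotopies are handled with no smoothing step. You also obtain the identity $I_\omega\circ\Theta=c$ of Proposition~\ref{prop:periods-agree} in the same stroke, so your argument effectively merges that proposition with the present lemma.

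\textbf{What it costs.} Your claim that the lifting route ``sidesteps'' Stokes is only partly true. The homotopy invariance is relocated into the de Rham vanishing $H^1_{\mathrm{dR}}(\wt M)=0$ behind Lemma~\ref{lem:primitive}. The elementary proof of that fact is path-independence of line integrals of a closed form on a simply connected manifold, which is precisely the present statement applied on $\wt M$. The paper's Stokes argument, by contrast, is self-contained on $M$ and needs neither the universal cover nor the primitive.
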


\begin{proof}
Let $\beta_0$ and $\beta_1$ be based loops at $x_0$, and assume that they are homotopic rel. basepoint
through a homotopy
\(
H:[0,1]\times[0,1]\to M
\). After a piecewise smooth approximation with the same boundary, Stokes' theorem gives
\[
0=\int_{[0,1]\times[0,1]} H^*(d\omega)=\int_{\partial([0,1]\times[0,1])} H^*\omega.
\]
The two vertical sides map to the constant loop at $x_0$, so they contribute zero. Therefore,
\[
\int_{\beta_0}\omega=\int_{\beta_1}\omega.
\]
Additivity under concatenation follows from the additivity of line integrals.
\end{proof}

Let
\(
\Theta:\Gamma\longrightarrow \pi_1(M,x_0)
\)
denote the standard identification between deck transformations and the fundamental group
(see~\cite[Proposition~1.39]{Hatcher02}).

\begin{proposition}\label{prop:periods-agree}
For every $\gamma\in\Gamma$,
\(
I_\omega(\Theta(\gamma))=c(\gamma).
\)
In particular, $I_\omega$ is injective.
\end{proposition}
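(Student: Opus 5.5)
The plan is to compute both sides on a single path in the universal cover. I recall the standard identification $\Theta$ from \cite[Proposition~1.39]{Hatcher02}. Given $\gamma\in\Gamma$, choose a path $\wt\beta:[0,1]\to\wt M$ from $\wt x_0$ to $\gamma(\wt x_0)$; since $\wt M$ is simply connected, any two such paths are homotopic rel endpoints. Its projection $\beta:=\pi\circ\wt\beta$ is a loop at $x_0$, and $\Theta(\gamma)=[\beta]$. With the opposite composition convention one uses $\gamma^{-1}$ instead. Because $c$ is a homomorphism into an abelian group, this only changes signs consistently. I will fix the convention under which $\Theta$ is an isomorphism and check that the formula holds with that convention.

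The first step is to approximate $\wt\beta$ by a piecewise $C^1$ path with the same endpoints. By Lemma~\ref{lem:stokes}, applied downstairs to the projected loops, this does not change $I_\omega([\beta])$. The second step is the computation itself. Since $\wt\omega=\pi^*\omega$ and $\beta=\pi\circ\wt\beta$, the change of variables for line integrals gives
\[
I_\omega(\Theta(\gamma))=\int_\beta\omega=\int_{\wt\beta}\pi^*\omega=\int_{\wt\beta}df .
\]
By Lemma~\ref{lem:primitive} and the fundamental theorem of calculus along each $C^1$ piece, this equals
\[
f(\wt\beta(1))-f(\wt\beta(0))=f(\gamma(\wt x_0))-f(\wt x_0)=c(\gamma),
\]
and the last equality is the definition of $c$, which is independent of the point by Lemma~\ref{lem:deck-period}.

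The final step is injectivity. $\Theta$ is a bijection (an isomorphism), and Lemma~\ref{lem:deck-period-injective} says $c$ is injective. Since $I_\omega=c\circ\Theta^{-1}$, it follows that $I_\omega$ is injective.

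The only delicate point is the bookkeeping around the convention for $\Theta$, that is, whether $\gamma$ is matched with the loop from $\wt x_0$ to $\gamma(\wt x_0)$ or to $\gamma^{-1}(\wt x_0)$. The cleanest way to handle it is to define $\Theta$ explicitly by the lifting recipe above, so that it is a homomorphism of groups. Then the identity $I_\omega\circ\Theta=c$ is an identity between homomorphisms, and checking it elementwise as above suffices.
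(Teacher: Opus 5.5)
Your proposal is correct and follows the paper's proof essentially verbatim. You lift to a path $\wt\beta$ from $\wt x_0$ to $\gamma(\wt x_0)$, integrate $\pi^*\omega=df$ along it to get $f(\gamma(\wt x_0))-f(\wt x_0)=c(\gamma)$, and deduce injectivity of $I_\omega$ from Lemma~\ref{lem:deck-period-injective} together with the bijectivity of $\Theta$; your extra care about piecewise $C^1$ approximation and the convention for $\Theta$ is harmless, since with Hatcher's convention the path from $\wt x_0$ to $\gamma(\wt x_0)$ is exactly the one defining $\Theta(\gamma)$, and the injectivity step only uses that $\Theta$ is a bijection.
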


\begin{proof}
Choose a path $\wt\beta:[0,1]\to\wt M$ from $\wt x_0$ to $\gamma(\wt x_0)$. Using
Lemma~\ref{lem:primitive},
\[
I_\omega(\Theta(\gamma))
=\int_{\pi\circ\wt\beta}\omega
=\int_{\wt\beta}\pi^*\omega
=\int_{\wt\beta}df
=f(\gamma(\wt x_0))-f(\wt x_0)
=c(\gamma).
\]
If $I_\omega(\Theta(\gamma))=0$, then $c(\gamma)=0$, and Lemma~\ref{lem:deck-period-injective} gives
$\gamma=\id$. Hence $\Theta(\gamma)=1$, and $I_\omega$ is injective.
\end{proof}

The next result is the closed transverse loop normal form, in the spirit of~\cite[Theorem~5]{Rosenberg68}.

\begin{proposition}[Closed transverse loop representing a prescribed class]\label{prop:closed-transverse-loop}
Let $a\in\pi_1(M,x_0)$ be nontrivial. Then there exists a smooth based loop
\[
\beta_a:[0,1]\to M,
\qquad
\beta_a(0)=\beta_a(1)=x_0,
\]
such that
\begin{enumerate}
\item $\beta_a$ is everywhere transverse to $\F$;
\item $[\beta_a]=a$ in $\pi_1(M,x_0)$;
\item \(\displaystyle \int_{\beta_a}\omega=I_\omega(a)\neq 0\).
\end{enumerate}
\end{proposition}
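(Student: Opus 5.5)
The plan is to build the loop on the universal cover, using the product structure of Proposition~\ref{prop:global-product} together with the deck--period identity of Lemma~\ref{lem:deck-period} and Proposition~\ref{prop:periods-agree}.

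Let $\gamma=\Theta^{-1}(a)\in\Gamma$. Since $a\neq 1$, Proposition~\ref{prop:periods-agree} gives $c(\gamma)=I_\omega(a)\neq 0$, and this settles item~3 once item~2 is known. Replacing $\omega$ by $-\omega$ if necessary (which reverses the orientation convention), we may assume $c:=c(\gamma)>0$. By Proposition~\ref{prop:global-product}, write $\wt x_0=\Phi(t_0,x)$ and $\gamma(\wt x_0)=\Phi(t_0+c,y)$ with $x,y\in\wt L_0$; here the $\R$--coordinates differ by exactly $c$ because $f\circ\Phi(t,\cdot)=f(\wt L_0)+t$. Since $\wt L_0\cong\R^3$ is connected, choose a smooth path $\sigma:[0,1]\to\wt L_0$ from $x$ to $y$. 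Take a smooth nondecreasing reparametrisation $\tau:[0,1]\to[0,1]$ that is constant near the endpoints, and set
\[
\wt\beta(s):=\Phi\bigl(t_0+cs,\ \sigma(\tau(s))\bigr).
\]
Then $\wt\beta$ runs from $\wt x_0$ to $\gamma(\wt x_0)$, and $f(\wt\beta(s))=\mathrm{const}+cs$. Consequently $\wt\omega(\wt\beta'(s))=\frac{d}{ds}f(\wt\beta(s))=c>0$ for all $s$, so $\wt\beta$ is everywhere transverse to $\wt\F$.

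Now put $\beta_a:=\pi\circ\wt\beta$. This is a loop at $x_0$, since $\pi\gamma=\pi$. It satisfies $\omega(\beta_a')=c\neq0$, so it is transverse to $\F$, which is item~1. Because $\wt\beta$ joins $\wt x_0$ to $\gamma(\wt x_0)$, the standard correspondence gives $[\beta_a]=\Theta(\gamma)=a$, which is item~2. Finally $\int_{\beta_a}\omega=\int_0^1 c\,ds=c=I_\omega(a)$, which is item~3.

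There are two points that need care. The first is smoothness. $\Phi$ is only $C^1$ (the field $X$ is $C^1$), so $\beta_a$ is a priori $C^1$. Transversality is an open condition, so a $C^1$-small smoothing that fixes the endpoints keeps $\omega(\beta_a')>0$ and stays in the same homotopy class. The period is then unchanged by Lemma~\ref{lem:stokes}. The second is the reduction $c>0$. Rather than changing $\omega$, it is simplest to run $\Phi$ with $t_0+cs$ for either sign of $c$; transversality only needs $\omega(\beta')$ to be nonzero, and it equals the constant $c$. The step I expect to be the main obstacle is this regularity bookkeeping, since the construction itself is direct once the global product is available.
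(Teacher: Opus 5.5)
Your proposal is correct and is essentially the paper's proof: the paper also lifts to $\wt M$, flows a leafwise path $\alpha$ in $\wt L_0$ by $\wt C_a(t)=\wt\psi_{c_a t}(\alpha(t))$, checks $df(\dot{\wt C}_a)=c_a\neq 0$, and projects to $M$. The only difference is that the paper takes $\wt L_0$ to be the leaf through $\wt x_0$, so your $t_0=0$. Your extra step of smoothing the a priori only $C^1$ path handles a regularity point the paper passes over silently, but it does not change the argument.
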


\begin{proof}
Set $\gamma_a:=\Theta^{-1}(a)\in\Gamma$ and
\(
c_a:=c(\gamma_a)=I_\omega(a),
\)
the second equality being Proposition~\ref{prop:periods-agree}. Since $a\neq 1$ and $I_\omega$ is
injective, we have $c_a\neq 0$.

Let $\wt L_0$ be the leaf through $\wt x_0$, and define
\(
\wt y_1:=\wt\psi_{-c_a}\bigl(\gamma_a(\wt x_0)\bigr).
\)
Lemma~\ref{lem:f-linear-along-flow} gives
\[
f(\wt y_1)=f\bigl(\gamma_a(\wt x_0)\bigr)-c_a=f(\wt x_0),
\]
so $\wt y_1\in\wt L_0$. Choose a smooth path $\alpha:[0,1]\to\wt L_0$ from $\wt x_0$ to $\wt y_1$,
and set
\[
\wt C_a(t):=\wt\psi_{c_a t}(\alpha(t)),\qquad t\in[0,1].
\]
We have $\wt C_a(0)=\wt x_0$ and $\wt C_a(1)=\wt\psi_{c_a}(\wt y_1)=\gamma_a(\wt x_0)$, so $\beta_a:=\pi\circ \wt C_a$ is a based loop at $x_0$, and by construction it represents the
class $a$.

To check transversality, differentiate $\wt C_a(t)=\wt\psi_{c_a t}(\alpha(t))$:
\[
\dot{\wt C}_a(t)
=c_a\,\wt X\bigl(\wt C_a(t)\bigr)+(d\wt\psi_{c_a t})_{\alpha(t)}\bigl(\dot\alpha(t)\bigr).
\]
The second term is tangent to the lifted foliation, since $\alpha(t)$ lies in the leaf $\wt L_0$ and
the flow preserves $\wt\F$; the first term is a non-zero multiple of the transverse field $\wt X$.
Applying $df$ and using $df(\wt X)=1$, we obtain
\[
df\bigl(\dot{\wt C}_a(t)\bigr)=c_a\neq 0,
\]
so $\dot{\wt C}_a(t)$ is never tangent to $\wt\F$. Hence $\wt C_a$ is everywhere transverse to
$\wt\F$, and so is $\beta_a$ to $\F$.

Finally,
\[
\int_{\beta_a}\omega
=\int_{\wt C_a}\pi^*\omega
=\int_{\wt C_a}df
=f\bigl(\wt C_a(1)\bigr)-f\bigl(\wt C_a(0)\bigr)
=f\bigl(\gamma_a(\wt x_0)\bigr)-f(\wt x_0)
=c_a
=I_\omega(a).
\]
\end{proof}

We now extract the algebraic consequence: the injective homomorphism
$I_\omega:\pi_1(M,x_0)\hookrightarrow (\R,+)$ forces the fundamental group to be abelian and
torsion-free.

\begin{corollary}\label{cor:free-abelian}
The fundamental group $\pi_1(M,x_0)$ is finitely generated, abelian and torsion-free; in particular,
\(
\pi_1(M,x_0)\cong \Z^r
\)
for some integer $r\ge 0$.
\end{corollary}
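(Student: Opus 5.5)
The plan is to read off all three properties from the injective homomorphism $I_\omega:\pi_1(M,x_0)\hookrightarrow(\R,+)$ of Proposition~\ref{prop:periods-agree}, together with the compactness of $M$.

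\emph{Abelian and torsion-free.} Since $I_\omega$ is an injective homomorphism, $\pi_1(M,x_0)$ is isomorphic to its image, which is a subgroup of $(\R,+)$. Any subgroup of $(\R,+)$ is abelian. It is also torsion-free, because $n\,t=0$ in $\R$ with $n\neq 0$ forces $t=0$. Both properties therefore pass to $\pi_1(M,x_0)$ at once, and no further foliation-theoretic input is needed.

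\emph{Finite generation.} This comes from topology rather than from the foliation. The manifold $M$ is a closed topological manifold, hence a compact ANR; alternatively, $M$ carries a $C^2$ structure by Remark~\ref{rem:wlog-smooth}, so it admits a finite triangulation (Whitehead), or has the homotopy type of a finite CW complex. The fundamental group of a finite CW complex is finitely presented, since it is read off from the $2$-skeleton. In particular $\pi_1(M,x_0)$ is finitely generated. This is the only step that uses an external result, and it is standard; I do not expect it to be an obstacle.

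\emph{Conclusion.} A finitely generated, torsion-free abelian group is free abelian of finite rank, by the structure theorem for finitely generated abelian groups. Hence $\pi_1(M,x_0)\cong\Z^r$ for some $r\ge 0$, where $r$ equals the rank of the image $I_\omega(\pi_1(M,x_0))\subset\R$ as a $\Z$-module.

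One might add that $r\ge 1$: otherwise $M$ would be simply connected, so $M=\wt M\cong\R^4$ by Corollary~\ref{cor:univcover-R4}, contradicting compactness. The statement does not require this, and pinning down $r=4$ is left to the later cohomological comparison.
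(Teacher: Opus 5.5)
Your proposal is correct and follows essentially the same route as the paper. Abelianness and torsion-freeness come from the injective period homomorphism $I_\omega$ into $(\R,+)$, which the paper checks elementwise on commutators and on $a^n$. Finite generation comes from $M$ having the homotopy type of a finite CW complex, and the structure theorem then gives $\Z^r$.
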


\begin{proof}
By Lemma~\ref{lem:stokes}, $I_\omega$ is a homomorphism, and by Proposition~\ref{prop:periods-agree}
it is injective. Given $a,b\in\pi_1(M,x_0)$, since $I_\omega$ takes values in the abelian group
$(\R,+)$,
\[
I_\omega([a,b])=I_\omega(aba^{-1}b^{-1})=I_\omega(a)+I_\omega(b)-I_\omega(a)-I_\omega(b)=0,
\]
and injectivity gives $[a,b]=1$; hence $\pi_1(M,x_0)$ is abelian.

If $a^n=1$ for some $n\ge 1$, then
\[
0=I_\omega(1)=I_\omega(a^n)=nI_\omega(a),
\]
so $I_\omega(a)=0$ and, by injectivity, $a=1$. Thus $\pi_1(M,x_0)$ is torsion-free.

Finally, $M$ is closed and has the homotopy type of a finite CW complex, so $\pi_1(M,x_0)$ is finitely
generated (see~\cite[Corollary~A.8]{Hatcher02}); the structure theorem for finitely generated abelian
groups then gives $\pi_1(M,x_0)\cong\Z^r$ for some $r\ge 0$.
\end{proof}

\section{Proof of the main theorem}\label{sec:proof}

At this stage the proof is purely topological. The foliation has already given us the two decisive
facts: $M$ is aspherical by Corollary~\ref{cor:univcover-R4}, and
\(
\pi_1(M)\cong \Z^r
\)
by Corollary~\ref{cor:free-abelian}. We now determine the rank, and then use topological rigidity
in dimension four.

\begin{proof}[Proof of Theorem~\ref{main}]
By Corollary~\ref{cor:univcover-R4}, $\wt M\cong \R^4$, so the universal cover is contractible and
$M$ is aspherical; in particular, $M$ is a $K(\pi_1(M),1)$ space in the sense
of~\cite[\S1.B]{Hatcher02}. By Corollary~\ref{cor:free-abelian},
\(
\pi_1(M)\cong \Z^r
\)
for some $r\ge 0$, and hence $M$ is a $K(\Z^r,1)$ space. By~\cite[Theorem~1B.8]{Hatcher02}, $M$ has
the homotopy type of the standard model $\T^r=(\s^1)^r$.

Since $M$ is a closed orientable $4$--manifold, Poincaré duality gives
\(
H^4(M;\Z)\cong H_0(M;\Z)\cong \Z
\)
(see~\cite[Theorem~3.30]{Hatcher02}). As $M\simeq \T^r$, this yields
\(
H^4(\T^r;\Z)\neq 0.
\)
On the other hand, the Künneth formula applied to $\T^r=(\s^1)^r$, together with $H^*(\s^1;\Z)\cong
\Z[x]/(x^2)$, gives (see~\cite[Theorem~3B.6]{Hatcher02})
\[
H^*(\T^r;\Z)\cong \bigwedge
\nolimits^* H^1(\T^r;\Z)
\cong \bigwedge
\nolimits^*(\Z^r),
\]
and so
\[
H^4(\T^r;\Z)\cong \bigwedge
\nolimits^4(\Z^r),
\]
which is non-zero if and only if $r\ge 4$. Hence $r\ge 4$.

Conversely, since $M$ is $4$--dimensional, $H^k(M;\Z)=0$ for every $k>4$, and the same vanishing
holds for $\T^r$. If $r>4$, then
\(
H^r(\T^r;\Z)\cong \Z\neq 0,
\)
a contradiction. We conclude that $r=4$; in particular, $M$ is a $K(\Z^4,1)$, so $M\simeq\T^4$.

It remains to invoke topological rigidity for closed aspherical $4$--manifolds with $\pi_1\cong\Z^4$:
any homotopy equivalence between two such manifolds is homotopic to a homeomorphism. This follows
by combining the surgery $L$--group computation for poly-(finite or cyclic) groups, due to
F.T.~Farrell and L.E.~Jones~\cite{FarrellJones88}, with topological surgery in dimension four for
good fundamental groups in the sense of M.H.~Freedman~\cite{FreedmanQuinn} -- the group $\Z^4$ being
elementary amenable, and thus good. Since both $M$ and $\T^4$ are closed aspherical $4$--manifolds with
\(
\pi_1(M)\cong \pi_1(\T^4)\cong \Z^4,
\)
we conclude that $M$ is homeomorphic to $\T^4$.
\end{proof}

\subsection{Proof of Corollary \ref{cor:smooth-upgrade}}

Since $\omega$ is a smooth nowhere-vanishing closed $1$--form on the closed manifold $M$, Tischler's
theorem~\cite{Tischler70} provides a smooth fiber bundle $p:M\longrightarrow \s^1$; let $F:=p^{-1}(t)$
denote the fiber. The long exact sequence of homotopy groups of the fibration
$F\hookrightarrow M\to \s^1$ contains
\[
\pi_2(\s^1)\to \pi_1(F)\to \pi_1(M)\to \pi_1(\s^1)\to \pi_0(F),
\]
and since $\pi_2(\s^1)=0$ and $F$ is connected (so $\pi_0(F)=0$), we obtain the short exact
sequence
\[
0\longrightarrow \pi_1(F)
\longrightarrow \pi_1(M)
\longrightarrow \pi_1(\s^1)\cong\Z
\longrightarrow 0.
\]
Note that $\pi_1(M)\cong\Z^4$ by Theorem~\ref{main}, so $\pi_1(F)\cong\Z^3$. Moreover, the universal cover
of $M$ is $\R^4$, so $M$ is aspherical, and the long exact sequence of the fibration also gives
$\pi_k(F)\cong\pi_k(M)=0$ for every $k\ge 2$. Therefore $F$ is an aspherical closed $3$--manifold.

In particular, $F$ is a $K(\pi_1(F),1)$ space; since $\pi_1(F)\cong \Z^3$ and the torus
$\T^3=\R^3/\Z^3$ is also a $K(\Z^3,1)$ space, $F$ and $\T^3$ are homotopy equivalent. By the
geometrization theorem for $3$--manifolds (Perelman~\cite{Perelman1,Perelman2}; see also~\cite{Scott83}),
a closed aspherical $3$--manifold with fundamental group $\Z^3$ admits a flat geometry, and in
particular is homeomorphic to $\T^3$. By E.E.~Moise's theorem~\cite{Moise52,Moise77}, every topological
$3$--manifold carries a unique smooth structure up to diffeomorphism, so this homeomorphism upgrades
to a diffeomorphism and $F\overset{\mathrm{diff}}{\cong}\T^3$.

The manifold $M$ is therefore the mapping torus of a diffeomorphism $\psi:\T^3\to \T^3$, and in
particular $\pi_1(M)\cong \Z^3\rtimes_{\psi_*}\Z$. Since $\pi_1(M)\cong\Z^4$ is abelian, the action
$\psi_*$ on $\Z^3$ must be trivial; that is, $\psi$ induces the identity on $\pi_1(\T^3)$. By
F.~Waldhausen's theorem on Haken $3$--manifolds~\cite{Waldhausen68}, $\psi$ is then \emph{homotopic}
to the identity, and the resolution of the Smale conjecture for Haken $3$--manifolds, due to
A.~Hatcher~\cite{Hatcher76}, upgrades this homotopy to a smooth isotopy. The mapping torus of $\psi$
is therefore diffeomorphic to $\T^3\times \s^1$, and we conclude that
$M\overset{\mathrm{diff}}{\cong}\T^4$.

\section*{Concluding remarks}

We have worked in dimension four, but the geometric part of the argument is dimension--independent.
Let $M^n$ be a closed, orientable $n$--manifold ($n\ge 3$) carrying a transversely oriented $C^2$
codimension--one foliation whose leaves are diffeomorphic to $\R^{n-1}$. The leaves are simply
connected, so the foliation has no holonomy, and the proof of
Sections~\ref{sec:prelim}--\ref{sec:periods} applies: Sacksteder's theorem and the
smoothing of H.~Imanishi provide a closed $C^2$ defining $1$--form, the flow of a transverse vector
field gives a global product decomposition $\wt M\cong\R\times\R^{n-1}\cong\R^n$ of the universal
cover, and the period homomorphism identifies $\pi_1(M)$ with $\Z^n$. Only the final step, where we
pass from the homotopy equivalence $M\simeq\T^n$ to a homeomorphism, depends on the dimension. For
$n=3$ this is H.~Rosenberg's theorem~\cite{Rosenberg68} (alternatively, geometrization together with
J.~Stallings' fibering theorem~\cite{Stallings62}); for $n=4$ it is the theorem of M.H.~Freedman and
F.~Quinn~\cite{FreedmanQuinn} used above; and for $n\ge 5$ it is the topological rigidity of $\Z^n$,
a special case of the work of A.~Bartels and W.~L\"uck~\cite{BartelsLuck12}. In every dimension,
therefore, $M^n$ is homeomorphic to $\T^n$.

A word on regularity is also in order. The class $C^2$ enters our geometric argument at a single
point: Sacksteder's theorem~\cite{Sacksteder65}, which produces the closed defining $1$--form,
requires $C^2$ regularity, as does the smoothing of H.~Imanishi~\cite{Imanishi74} (valid for
$r\ge 2$). In class $C^1$ this closed--form route is no longer available, and the self--contained
proof of Sections~\ref{sec:prelim}--\ref{sec:periods} does not apply. The topological conclusion
itself, however, does not pass through the closed $1$--form: for a codimension--one $C^0$ foliation
of a closed manifold whose leaves are homeomorphic to $\R^{n-1}$, the topological route of
T.~Li~\cite{Li02}, G.~Hector and U.~Hirsch~\cite[Ch.~VIII]{HectorHirsch83} and F.T.~Farrell and
L.E.~Jones~\cite{FarrellJones88} -- carried out explicitly by
T.~Yokoyama~\cite[Theorem~3]{YokoyamaThesis} -- already yields $M^n\cong\T^n$ for every $n\ge 3$. In
particular, $M^n$ is homeomorphic to $\T^n$ in every regularity class $C^r$, $r\ge 0$, the case $C^1$
included; only the geometric proof given here, and not the topological conclusion, requires $C^2$.

We should point out that the smooth conclusion is special to dimension four. The proof of
Corollary~\ref{cor:smooth-upgrade} reduces, through Tischler's fibering theorem~\cite{Tischler70},
to the fact that a diffeomorphism of $\T^3$ inducing the identity on $\pi_1$ is smoothly isotopic to
the identity, which in turn rests on the geometrization of $3$--manifolds and on A.~Hatcher's results
on $\mathrm{Diff}(\T^3)$~\cite{Hatcher76}. In dimensions $n\ge 5$ this last input has no analogue, and the smooth conclusion is genuinely
obstructed. On the one hand, for every $n\ge 5$ the torus $\T^n$ carries exotic smooth structures
(W.-C.~Hsiang and J.L.~Shaneson~\cite{HsiangShaneson69}, W.-C.~Hsiang and
C.T.C.~Wall~\cite{HsiangWallTori}). On the other hand, and more decisively, the mapping--torus step
itself breaks down once $n\ge 6$: the Torelli subgroup of $\pi_0(\mathrm{Diff}(\T^{k}))$ is
non--trivial for $k\ge 5$ (A.~Hatcher and J.~Wagoner~\cite{HatcherWagoner73}; see also
A.~Hatcher~\cite{Hatcher78ConcSpaces}), so for $n\ge 6$ a diffeomorphism of $\T^{n-1}$ acting
trivially on $\pi_1$ need not be isotopic to the identity. The borderline case $n=5$, where the
fibre is $\T^4$, is the most delicate. The dichotomy is thus genuine: the topological classification
holds in every dimension $n\ge 3$, while the smooth upgrade is available only in dimension four.

\section*{Acknowledgments}

The authors are grateful to T.~Yokoyama and T.~Tsuboi for a careful report on an earlier version of
this manuscript. Their comments identified a regularity gap in our use of Sacksteder's theorem -- now
repaired via Imanishi~\cite{Imanishi74} -- brought to our attention the general topological
route~\cite{Li02,HectorHirsch83,FarrellJones88} made explicit in the thesis~\cite{YokoyamaThesis}, and
pointed out that the Liouville (Fayad--Khanin) example~\cite{FayadKhanin} obstructs a direct smooth
upgrade of Theorem~\ref{main}, so that the smoothness hypothesis of Corollary~\ref{cor:smooth-upgrade}
is a genuine restriction.
The authors also thank \'E.~Ghys for drawing their attention to the Hirsch
foliation~\cite{Hirsch75Stable} and to S.E.~Goodman's 1977 Dijon construction, which clarified the
role of the simple--connectedness hypothesis in Theorem~\ref{main}.

\end{document}